\documentclass[10pt]{article}

\usepackage[margin=1in]{geometry}

\usepackage[T1]{fontenc}
\usepackage[utf8]{inputenc}
\usepackage{lmodern}

\usepackage{amsmath,amssymb,amsthm,mathtools}

\usepackage{enumitem}

\usepackage[hidelinks]{hyperref}

\hypersetup{
	pdfauthor={XuanYu Cao; Chunxiang Wang},
	pdftitle={
		Maximum and Minimum Spectral Radii in an Exceptional Family
		for Edge-Disjoint Spanning Trees
	},
	pdfkeywords={
		spectral radius,
		edge-disjoint spanning trees,
		spanning-tree packing,
		exceptional graph family,
		equitable quotient
	}
}

\usepackage[nameinlink,capitalize,noabbrev]{cleveref}

\theoremstyle{plain}

\newtheorem{theorem}{Theorem}[section]
\newtheorem{lemma}[theorem]{Lemma}
\newtheorem{proposition}[theorem]{Proposition}
\newtheorem{corollary}[theorem]{Corollary}
\newtheorem{claim}[theorem]{Claim}

\theoremstyle{definition}

\theoremstyle{remark}

\newtheorem{remark}[theorem]{Remark}

\crefname{theorem}{Theorem}{Theorems}
\Crefname{theorem}{Theorem}{Theorems}

\crefname{lemma}{Lemma}{Lemmas}
\Crefname{lemma}{Lemma}{Lemmas}

\crefname{proposition}{Proposition}{Propositions}
\Crefname{proposition}{Proposition}{Propositions}

\crefname{corollary}{Corollary}{Corollaries}
\Crefname{corollary}{Corollary}{Corollaries}

\crefname{claim}{Claim}{Claims}
\Crefname{claim}{Claim}{Claims}

\crefname{definition}{Definition}{Definitions}
\Crefname{definition}{Definition}{Definitions}

\crefname{remark}{Remark}{Remarks}
\Crefname{remark}{Remark}{Remarks}

\newcommand{\one}{\mathbf{1}}

\title{Maximum and Minimum Spectral Radii in an Exceptional Family\\
	for Edge-Disjoint Spanning Trees}
\author{XuanYu Cao\thanks{School of Mathematics and
		Statistics, Central China Normal University, Wuhan, China.} \and Chunxiang Wang\thanks{School of Mathematics and Statistics, and Hubei Key
Lab–Math. Sci., Central China Normal University, Wuhan, China}}
\date{}

\begin{document}
	\maketitle
	
	\begin{abstract}
	For a connected graph \(G\), let \(\tau(G)\) denote the maximum number
	of pairwise edge-disjoint spanning trees in \(G\), and let \(\rho(G)\)
	denote its adjacency spectral radius. For integers \(n\ge1\), \(k\ge2\),
	and \(k\le\delta\le2k-1\), let \(\mathcal{G}_{n,\delta}\) be the class
	of connected \(n\)-vertex graphs with minimum degree \(\delta\), and let
	\(\mathcal{L}_{\mathcal{H}}^{2}(n,k,\delta)\subseteq
	\mathcal{G}_{n,\delta}\) denote the exceptional family introduced by
	Chang, Li, and Zhang. They showed that, when \(n\) is sufficiently large
	relative to \(\delta\), determining the sharp adjacency-spectral threshold
	for \(\tau(G)\ge k\) reduces to maximizing \(\rho(G)\) over this family.

	Write \(h=\delta-k\). For every fixed admissible pair \((k,\delta)\)
	and all sufficiently large \(n\), we determine the maximum and minimum
	adjacency spectral radii over
	\(\mathcal{L}_{\mathcal{H}}^{2}(n,k,\delta)\). In the associated
	core--placement representation, write \(M\) for the missing-edge graph
	of the bounded core. For \(h\ge2\), the unique maximizer up to isomorphism
	has \(M\cong K_{1,h}\cup(h+1)K_1\), and its exceptional edges have a
	nested placement on the large-clique side. Consequently, the matching
	missing-edge configuration proposed in Conjecture~2 of
	Chang--Li--Zhang is not extremal. For every \(h\ge1\), the unique
	minimizer up to isomorphism has \(M\cong hK_2\cup2K_1\), and its \(2h\)
	exceptional edges have pairwise distinct endpoints on the large-clique
	side. The boundary cases \(h=0\) and \(h=1\), together with all equality
	cases, are also determined.

	The proof combines an exact core--placement parametrization with a
	uniform Schur-complement resolvent expansion. The first
	candidate-dependent coefficient is an affine function of
	\(\sum_{x\in V(M)}d_M(x)^2\), whereas the first placement-sensitive
	coefficient is a squared-load functional. Explicit equitable quotient
	matrices determine the two extremal radii, and the maximizing graph
	yields the sharp global adjacency-spectral threshold.
\end{abstract}
	
	\noindent\textbf{AMS Classification:} 05B40, 05C40, 05C50.
	
	\noindent\textbf{Keywords:} spectral radius; edge-disjoint spanning trees;
	spanning-tree packing; exceptional graph family; equitable quotient.
	
	\medskip
	
	\section{Introduction}
	
	Throughout this paper, all graphs are finite, simple, and undirected. Unless
otherwise stated, the graph \(G\) under consideration is connected. Let
\(V(G)\), \(E(G)\), \(e(G)\), \(\delta(G)\), \(A(G)\), and \(\rho(G)\)
denote the vertex set, edge set, size, minimum degree, adjacency matrix, and
adjacency spectral radius of \(G\), respectively, and put \(n=|V(G)|\).
Let \(\tau(G)\) denote the maximum number of pairwise edge-disjoint spanning
trees in \(G\). For positive integers \(n\) and \(\delta\), let
\(\mathcal{G}_{n,\delta}\) be the class of connected graphs \(G\) satisfying
\(|V(G)|=n\) and \(\delta(G)=\delta\).

For a partition \(\mathcal P\) of \(V(G)\), let \(e_G(\mathcal P)\) denote
the number of edges of \(G\) joining distinct parts of \(\mathcal P\). The
Nash--Williams--Tutte theorem \cite{NashWilliams1961,Tutte1961} states that,
for every positive integer \(k\), \(\tau(G)\ge k\) if and only if
\(e_G(\mathcal P)\ge k(|\mathcal P|-1)\) for every partition
\(\mathcal P\) of \(V(G)\).
	
	Spectral sufficient conditions for spanning-tree packing have been studied in
several settings. Cioab\u{a} and Wong initiated the adjacency-eigenvalue
approach for regular graphs \cite{CioabaWong2012}. Subsequent extensions to
general graphs were obtained in
\cite{GuLaiLiYao2016,LiShi2013,LiuHongLai2014,LiuHongGuLai2014}; see also
Palmer's survey \cite{Palmer2001}. The results most directly related to the
present work are due to Fan--Gu--Lin \cite{FanGuLin2023} and
Chang--Li--Zhang \cite{ChangLiZhang2026}.

Fan--Gu--Lin considered the range \(\delta\ge2k\). Their sharp size theorem
states that, for \(n\ge2\delta+2\),
\[
	e(G)\ge
	\binom{\delta+1}{2}+\binom{n-\delta-1}{2}+k
	\quad\Longrightarrow\quad
	\tau(G)\ge k.
\]
The bound is sharp: joining \(K_{\delta+1}\) and
\(K_{n-\delta-1}\) by any \(k-1\) edges produces an edge-extremal
obstruction. Among these obstructions, the unique spectral-radius maximizer
up to isomorphism is \(B_{n,\delta+1}^{k-1}\), obtained by making all
\(k-1\) cross edges incident with a single vertex of
\(K_{\delta+1}\). More precisely, their Theorem~1.2 states that if \(G\) is
a connected \(n\)-vertex graph with minimum degree \(\delta\), where
\(k\ge2\), \(\delta\ge2k\), and \(n\ge2\delta+3\), then
\[
	\rho(G)\ge\rho\bigl(B_{n,\delta+1}^{k-1}\bigr)
	\quad\Longrightarrow\quad
	\tau(G)\ge k
	\ \text{or}\
	G\cong B_{n,\delta+1}^{k-1}.
\]
Their proof first reduces the problem to an edge-extremal family and then
uses a Perron-vector edge-moving argument to identify the unique
spectral-radius maximizer.

Chang--Li--Zhang considered the complementary range
\(k\le\delta\le2k-1\). Their sharp size theorem identifies an exceptional
family \(\mathcal{L}_{\mathcal{H}}\), while their spectral theorem shows
that, for \(n\gg\delta\), determining the sharp global spectral threshold
reduces to identifying the spectral-radius maximizers in
\(\mathcal{L}_{\mathcal{H}}^{2}(n,k,\delta)\). In their notation,
Conjecture~2 predicts that the missing-edge graph of such a maximizer, that
is, the complement of its core, is isomorphic to
\((\delta-k)K_2\cup2K_1\), and that all core vertices incident with a
missing edge have a common neighbor in the large clique. Thus the remaining
step between their reduction and an explicit sharp spectral threshold is a
finite but nontrivial optimization problem within
\(\mathcal{L}_{\mathcal{H}}^{2}\).
	
	We complete this optimization and also determine the lower spectral
endpoint. Set
\(
h=\delta-k,
\)
so that \(0\le h\le k-1\). For \(h\ge2\), the missing-edge graph of the
unique maximizer is
\(
K_{1,h}\cup(h+1)K_1,
\)
rather than a matching, and its exceptional \(X\)--\(Y\) edges follow
a nested attachment pattern. By contrast, the unique minimizer has
exactly the matching-type missing-edge graph
\(
hK_2\cup2K_1
\)
proposed by Chang--Li--Zhang, while its exceptional \(X\)--\(Y\) edges
follow a dispersed attachment pattern. These two extremal structures
exhibit a concentration--dispersion principle: concentrating both the
missing-edge degrees and the loads on the \(Y\)-side increases the
spectral radius, whereas dispersing them decreases it.

Our proof method differs from the local Perron-vector edge-moving
arguments used in the aforementioned works. We eliminate the
large-clique block via a Schur-complement reduction and then derive a
uniform resolvent expansion whose successive coefficients are
determined by statistics of the bounded core and its attachment to the
clique. The first coefficient depending on the missing-edge graph \(M\)
is
\(
\|b_M\|_2^2=\sum_{v\in V(M)}d_M(v)^2,
\)
the degree-square sum of \(M\). Once \(M\) is fixed, the first
placement-dependent coefficient is a quadratic load functional on the
\(Y\)-side. The resolvent expansion provides uniform asymptotic
comparisons among all admissible candidates and treats the maximization
and minimization problems within a single framework.
	
	More specifically, our contributions are threefold:
\begin{enumerate}[label=\textup{(\roman*)},leftmargin=*]
	\item we give an exact parametrization of
	\(\mathcal{L}_{\mathcal{H}}^{2}(n,k,\delta)\) in terms of the graphs
	\(G_h(M,B)\), together with a complete characterization of the
	degree and edge-count constraints;
	\item we establish a uniform resolvent expansion for a graph
	consisting of a large clique attached to a bounded exceptional core;
	and
	\item we determine the unique maximizer and minimizer, up to
	isomorphism, and hence characterize all equality cases. We then
	combine the resulting maximum with the reduction theorem of
	Chang--Li--Zhang \cite{ChangLiZhang2026} to obtain a sharp global
	adjacency-spectral threshold.
\end{enumerate}

The comparisons used to establish these classifications are asymptotic
as \(n\to\infty\). For each fixed admissible pair \((k,\delta)\), the
finiteness of the core--placement patterns and the uniformity of the
remainder estimates ensure the existence of an integer
\(
n_{\mathrm{int}}(k,\delta)\ge2\delta+2
\)
such that both extremal classifications within
\(\mathcal{L}_{\mathcal{H}}^{2}(n,k,\delta)\) hold for every
\(n\ge n_{\mathrm{int}}(k,\delta)\); see
\Cref{subsec:common-threshold}. We do not claim an explicit formula
for this onset threshold. Let \(n_{\mathrm{CLZ}}(k,\delta)\) be an
integer such that Theorem~1.5 of Chang--Li--Zhang applies for every
\(n\ge n_{\mathrm{CLZ}}(k,\delta)\). For the final global statement,
we define
\begin{equation}
	\label{eq:n0-definition}
	n_0(k,\delta)=
	\max\{n_{\mathrm{CLZ}}(k,\delta),
	      n_{\mathrm{int}}(k,\delta)\}.
\end{equation}
Since Chang--Li--Zhang state their hypothesis in the nonquantitative
form \(n\gg\delta\), the present argument does not yield an explicit
numerical value of \(n_0(k,\delta)\). This lack of effectivity concerns
only the onset threshold, not the spectral value of the candidate
itself. Indeed, for every \(n\) for which the candidate is defined, it
admits an explicit equitable quotient matrix whose Perron root is
exactly the adjacency spectral radius of the candidate.
	
	We now state the two extremal comparison results within the exceptional
family. We begin with the maximization result.

\begin{theorem}
	\label{thm:intro-ordinary}
	Fix \(k\ge2\) and \(k\le\delta\le2k-1\), and set
	\(
	h=\delta-k.
	\)
	For every \(n\ge n_{\mathrm{int}}(k,\delta)\), there exists a graph
	\(
		G^\star_{h,k,n}
		\in\mathcal{L}_{\mathcal{H}}^{2}(n,k,\delta),
	\)
	characterized below, such that, for every
	\(
	G\in\mathcal{L}_{\mathcal{H}}^{2}(n,k,\delta),
	\)
	\(
		\rho(G)\le \rho\bigl(G^\star_{h,k,n}\bigr).
	\)
	Equality holds if and only if
	\(
	G\cong G^\star_{h,k,n}.
	\)
	Consequently, \(G^\star_{h,k,n}\) is the unique maximizer up to
	isomorphism. Its structure is as follows.
	\begin{enumerate}[label=\textup{(\roman*)}]
		\item If \(h=0\), then
		\(\mathcal{L}_{\mathcal{H}}^{2}(n,k,\delta)\)
		consists of a single isomorphism class, represented by
		\(G^\star_{0,k,n}\).

		\item If \(h=1\), then the corresponding missing-edge graph
		\(M^\star\) has a unique edge \(uv\). After relabeling the
		vertices of \(Y\), its \(Y\)-side neighborhoods satisfy
		\(
			N_Y(u)=N_Y(v)=\{y_1\}.
		\)

		\item If \(h\ge2\), then the corresponding missing-edge graph is
		\(
			M^\star\cong K_{1,h}\cup(h+1)K_1.
		\)
		Let \(c\) be the center of the \(K_{1,h}\) component, and let
		\(\ell_1,\ldots,\ell_h\) be its leaves. After relabeling the
		vertices of \(Y\), its \(Y\)-side neighborhoods satisfy
		\(
			N_Y(c)=\{y_1,\ldots,y_h\},
			\;
			N_Y(\ell_i)=\{y_1\}
			\; (1\le i\le h).
		\)
	\end{enumerate}
\end{theorem}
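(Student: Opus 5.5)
The proof has three parts: parametrize the family, expand the spectral radius in powers of \(1/n\), and then optimize the expansion coefficients one order at a time. First I would establish the core--placement parametrization announced in contribution (i). Every \(G\in\mathcal{L}_{\mathcal{H}}^{2}(n,k,\delta)\) should be isomorphic to a graph \(G_h(M,B)\) built from three pieces: a bounded core \(X\) whose complement inside \(X\) is the missing-edge graph \(M\); a large clique \(Y\) of order \(n-|X|\); and a bipartite attachment \(B\) between \(X\) and \(Y\). The degree constraint \(\delta(G)=\delta\) together with the edge count of the family should force \(e(M)=h\) and \(|V(M)|=2h+2\) (isolated vertices allowed). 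I expect it to force one more condition: each core vertex \(x\) has exactly \(d_M(x)\) neighbours in \(Y\). This gives \(2h\) exceptional \(X\)--\(Y\) edges in total, which is consistent with the minimizer description. The result is a finite list of pairs \((M,B)\), modulo the action of \(\mathrm{Sym}(Y)\), whose size is independent of \(n\). For \(h=0\) we have \(M\) empty and \(B=\emptyset\), so part (i) follows at once. For \(h=1\), \(M=K_2\cup 2K_1\), and the only choice is whether \(u\) and \(v\) attach to the same \(Y\)-vertex or to different ones.

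Next I would derive the resolvent expansion. Write \(A(G)\) in block form with \(A_Y=J-I\) of order \(m=n-|X|\). The eigenvalue \(\lambda=\rho(G)\) satisfies \(\lambda\in\operatorname{spec}\bigl(A_X+C(\lambda I-A_Y)^{-1}C^{T}\bigr)\). Here \((\lambda I-J+I)^{-1}=\frac{1}{\lambda+1}\bigl(I+\frac{J}{\lambda+1-m}\bigr)\) is explicit, \(\lambda=m-1+O(1)\), and \(C\) is the \(X\times Y\) incidence matrix. The \(J\)-part contributes a rank-one term of order \(1\) weighted by \(b=C\one\), and by the previous step \(b=b_M\), the degree vector of \(M\). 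The \(I\)-part contributes \(CC^{T}/(\lambda+1)\), which records the pairwise overlaps of \(Y\)-neighbourhoods. Iterating, I would obtain \(\rho=m-1+c_1/m+c_2/m^2+c_3/m^3+O(m^{-4})\), with an error that is uniform over the finitely many candidates. The plan is to show the following about the coefficients:

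\begin{itemize}
\item Every coefficient up to some order depends only on \(h\) and \(|X|\).
\item The first coefficient that depends on \(M\) is an increasing affine function of \(\|b_M\|_2^2=\sum_x d_M(x)^2\).
\item Once \(M\) is fixed, the next coefficient is an increasing function of the \(Y\)-load functional \(\sum_{y\in Y}\bigl(\sum_{x\in N_X(y)}w_x\bigr)^2\). The weights \(w_x\) come from the leading Perron vector on \(X\). I expect them to be positive, and larger for vertices with more \(Y\)-neighbours.
\end{itemize}

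To get this I would expand the Perron vector restricted to \(X\) perturbatively and read the coefficients off the Rayleigh identity.

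The optimization is then combinatorial. Among graphs with \(h\) edges, \(\sum d^2\) is maximized uniquely by the star \(K_{1,h}\), because \(\sum d^2\) is strictly increased by moving edges onto a common vertex. This holds for \(h\ge2\); for \(h=1\) there is only one choice. Hence \(M^\star\cong K_{1,h}\cup(h+1)K_1\). With \(M\) fixed, the centre \(c\) has \(h\) \(Y\)-neighbours and each leaf has one. The load functional is a sum of squares over \(y\) of a fixed total mass, so it is maximized by concentration. The leaves' single edges must all land on the same vertex, and that vertex must be one of the centre's neighbours \(y_1\). This gives the nested placement \(N_Y(\ell_i)=\{y_1\}\), and for \(h=1\) it gives \(N_Y(u)=N_Y(v)=\{y_1\}\). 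Uniqueness follows because each step is a strict inequality at its order, and the uniform error term lets us fix \(n_{\mathrm{int}}\) beyond which each order dominates the later ones.

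The main obstacle is the second expansion step. I need to identify exactly which order first sees \(M\) and which first sees \(B\). I also need to show that the \(B\)-coefficient is a strictly increasing, convex load functional whose weights favour concentration at a centre-neighbour, rather than some other arrangement of the leaves' edges. I would compute that coefficient through the equitable quotient of each candidate as a sanity check. I would then verify strictness by directly comparing the top two placements, namely all leaves at \(y_1\) versus the leaves spread out.
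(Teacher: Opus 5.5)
Your overall route is the paper's: reduce to \(G_h(M,B)\), eliminate the large clique by a Schur complement, select \(M\) by the degree-square sum, and then select the placement by a squared-load functional. There is, however, a genuine gap in the core-selection step. You claim that the star uniquely maximizes \(\sum_x d_M(x)^2\) for every \(h\ge2\), because moving edges onto a common vertex strictly increases it. This is false for \(h=3\). Write \(\sum_x d_M(x)^2=2h+2p(M)\), where \(p(M)\) counts pairs of edges sharing an endpoint. The maximum \(h(h+1)\) is attained exactly when the edges of \(M\) are pairwise intersecting, and for \(h=3\) this admits the triangle. Both \(K_{1,3}\cup4K_1\) (degrees \(3,1,1,1\)) and \(K_3\cup5K_1\) (degrees \(2,2,2\)) give the value \(12\). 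Moving a triangle edge to a new vertex yields the star with the same sum, so your monotonicity argument gives no strict increase there. Hence the \(N^{-2}\) coefficient \(\|b\|^2\) cannot separate these two cores. Your next step, which optimizes the placement with \(M\) already fixed, then presupposes a choice you have not made.

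The tie must be broken at order \(N^{-3}\) by the placement-independent coefficient \(b^{\mathsf T}A_Mb\), where \(A_M=A(K_X-M)\); your outline skips this coefficient entirely. Since \(\sum_x b(x)\) and \(\sum_x b(x)^2\) agree for the two cores, one has \(b^{\mathsf T}A_Mb=\bigl(\sum_x b(x)\bigr)^2-\sum_x b(x)^2-2\sum_{xy\in E(M)}b(x)b(y)\). The edge sum equals \(3(a+3)(a+1)\) for the star and \(3(a+2)^2\) for the triangle. So the star wins by \(6\) at order \(N^{-3}\), uniformly over all placements. Without this comparison, neither the identification of \(M^\star\) nor uniqueness is proved when \(h=3\).

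Three smaller corrections to your bookkeeping follow.

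First, the family contains the set \(W\) of \(a=k-h-1\) universal vertices inside the clique. Hence \(b=R^{\mathsf T}\one_C=a\one_X+d_M\), not \(d_M\) itself. The quantity \(\|b\|^2=ta^2+4ah+\sum_x d_M(x)^2\) is still increasing and affine in the degree-square sum. The load weights become \(b(c)=k-1\) and \(b(\ell_i)=k-h\).

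Second, the eigenvalue shift \(\lambda-(N-1)\) begins at order \(N^{-2}\), so your \(c_1\) vanishes. The core \(M\) enters at orders \(N^{-2}\) and \(N^{-3}\). The placement first enters at order \(N^{-4}\), through \(\|\mathcal P_CRb\|^2=\sum_{v\in C}\Lambda(v)^2-(b^{\mathsf T}b)^2/N\). You therefore need a uniform \(O(N^{-5})\) remainder; your displayed truncation at \(O(m^{-4})$ would not detect \(B\) at all.

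Third, the heuristic that a sum of squares with fixed mass is maximized by concentration must respect the constraint that \(c\) has \(h\) distinct \(Y\)-neighbours. Writing \(\Lambda(y)=\alpha\varepsilon_y+\beta r_y\) gives \(\sum_y\Lambda(y)^2=\alpha^2h+2\alpha\beta\sum_y\varepsilon_yr_y+\beta^2\sum_yr_y^2\). The bounds \(\sum_y\varepsilon_yr_y\le h\) and \(\sum_yr_y^2\le h^2\) hold with simultaneous equality exactly for the nested placement. This is a clean alternative to the paper's two exchange moves.
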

	
  \begin{corollary}[Failure of Conjecture~2 of Chang--Li--Zhang for \(h\ge2\)]
	\label{cor:conjecture-two}
	Let \(k\) and \(h\) be integers satisfying
	\(
	k\ge3
	\)
	and
	\(
	2\le h\le k-1,
	\)
	and set \(\delta=k+h\). For every
	\(
	n\ge n_{\mathrm{int}}(k,\delta),
	\)
	the matching-core candidate proposed in Conjecture~2 of
	Chang--Li--Zhang does not maximize the adjacency spectral radius over
	\(\mathcal{L}_{\mathcal{H}}^{2}(n,k,\delta)\). Instead, the unique
	maximizer is \(G^\star_{h,k,n}\), whose missing-edge graph is
	\(
		K_{1,h}\cup(h+1)K_1.
	\)
	Consequently, Conjecture~2 is false for every parameter pair
	\((k,\delta)\) satisfying
	\(
		k\ge3,
		\;
		k+2\le\delta\le2k-1.
	\)
\end{corollary}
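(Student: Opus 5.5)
The corollary follows from \Cref{thm:intro-ordinary}(iii) together with a check that the conjectured candidate is not isomorphic to \(G^\star_{h,k,n}\). Fix \(k\ge3\) and \(2\le h\le k-1\), so that \(\delta=k+h\) satisfies \(k+2\le\delta\le 2k-1\). This range is nonempty exactly when \(k\ge3\), which is why that hypothesis appears. Take \(n\ge n_{\mathrm{int}}(k,\delta)\).

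By \Cref{thm:intro-ordinary}, every \(G\in\mathcal{L}_{\mathcal{H}}^{2}(n,k,\delta)\) satisfies \(\rho(G)\le\rho(G^\star_{h,k,n})\), with equality only when \(G\cong G^\star_{h,k,n}\). By part (iii), the missing-edge graph of \(G^\star_{h,k,n}\) is \(K_{1,h}\cup(h+1)K_1\).

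The matching-core candidate of Conjecture~2 has missing-edge graph \(hK_2\cup 2K_1\). For \(h\ge2\) this graph is not isomorphic to \(K_{1,h}\cup(h+1)K_1\). The quickest invariant is the maximum degree: it is \(h\ge2\) for the star-type graph and \(1\) for the matching. Alternatively, the degree-square sums are \(h^2+h\) and \(2h\), which differ when \(h\ge2\).

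It remains to see that the missing-edge graph is an isomorphism invariant of the graph itself, so that different missing-edge graphs force non-isomorphic graphs. I will use the exact core--placement parametrization \(G_h(M,B)\) from the body of the paper. The core is intrinsically determined: its vertices are the bounded set of vertices lying outside the large clique \(K\). For large \(n\), any isomorphism must carry the unique clique of order about \(n\) onto itself, and hence must map the core onto the core. So isomorphic members of the family have isomorphic cores, and therefore isomorphic complements \(M\).

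Consequently, any graph realizing the Conjecture~2 configuration, with any placement, is not isomorphic to \(G^\star_{h,k,n}\), and so has strictly smaller spectral radius. Hence it is not a maximizer, and Conjecture~2 fails for every such \((k,\delta)\).

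The only step requiring real care is this invariance of \(M\) under isomorphism. If the parametrization lemma does not already state it, I would argue it directly: for \(n\) large, the vertices of \(K\) have degree at least about \(n\), while core vertices have degree \(O(\delta)\). An isomorphism preserves degrees, so it preserves the vertex partition, and therefore it preserves the induced core and its complement \(M\).
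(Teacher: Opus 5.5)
Your proposal is correct and is essentially the paper's argument: apply \Cref{thm:intro-ordinary}(iii) and distinguish \(hK_2\cup2K_1\) from \(K_{1,h}\cup(h+1)K_1\) by maximum degree (\(1\) versus \(h\ge2\)). Your added check that the missing-edge graph is an isomorphism invariant is correct and fills in a step the paper leaves implicit: every vertex of \(X\) has degree exactly \(\delta\), while every vertex of \(W\cup Y\) has degree at least \(N-1>\delta\) once \(n\ge n_{\mathrm{int}}(k,\delta)\). That step could also be bypassed, since the proof of \Cref{thm:star-core} already gives \(\rho(G_h(M,B))<\rho(G^\star_{h,k,n})\) for every representation whose \(M\) has a degree-square deficit, and \(2h<h(h+1)\) when \(h\ge2\).
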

    
  \begin{proof}
	Conjecture~2 predicts that a maximizing graph has missing-edge graph
	\(
		hK_2\cup2K_1,
	\)
	whose maximum degree is \(1\). By
	\Cref{thm:intro-ordinary}, however, the unique maximizer has
	missing-edge graph
	\(
		K_{1,h}\cup(h+1)K_1,
	\)
	whose maximum degree is \(h\). Equivalently,
	\(
		\Delta\bigl(hK_2\cup2K_1\bigr)=1,
		\;
		\Delta\bigl(K_{1,h}\cup(h+1)K_1\bigr)=h.
	\)
	Since \(h\ge2\), these two missing-edge graphs are nonisomorphic.
	Hence the unique maximizer does not have the structure predicted by
	Conjecture~2.
\end{proof}

\begin{remark}
	When \(h=1\), the two missing-edge graphs both specialize to
	\(
		K_2\cup2K_1.
	\)
	Moreover, the common-neighbor attachment pattern on the \(Y\)-side
	proposed by Chang--Li--Zhang, namely
	\(
		N_Y(u)=N_Y(v)=\{y_1\},
	\)
	agrees exactly with \Cref{thm:intro-ordinary}(ii).
\end{remark}
	
	The next theorem identifies the lower spectral endpoint. In particular,
for \(h\ge2\), the missing-edge graph at this endpoint is precisely the
matching-type graph proposed by Chang--Li--Zhang for the upper endpoint.

\begin{theorem}
	\label{thm:intro-ordinary-minimum}
	Fix integers \(k\) and \(\delta\) satisfying
	\(
	k\ge2
	\)
	and
	\(
	k\le\delta\le2k-1,
	\)
	and set \(h=\delta-k\). For every integer
	\(n\ge n_{\mathrm{int}}(k,\delta)\), there exists a graph
	\(
		G^{\min}_{h,k,n}
		\in\mathcal{L}_{\mathcal{H}}^{2}(n,k,\delta)
	\)
	such that, for every
	\(
	G\in\mathcal{L}_{\mathcal{H}}^{2}(n,k,\delta),
	\)
	we have
	\(
		\rho(G)\ge \rho\bigl(G^{\min}_{h,k,n}\bigr).
	\)
	Equality holds if and only if
	\(
	G\cong G^{\min}_{h,k,n}.
	\)
	Consequently, \(G^{\min}_{h,k,n}\) is the unique minimizer up to
	isomorphism.

	If \(h=0\), then
	\(
		G^{\min}_{0,k,n}\cong G^\star_{0,k,n}.
	\)
	If \(h\ge1\), then the missing-edge graph of
	\(G^{\min}_{h,k,n}\) is isomorphic to
	\(
		hK_2\cup2K_1,
	\)
	and the \(Y\)-endpoints of its \(2h\) exceptional \(X\)--\(Y\)
	edges are pairwise distinct.
\end{theorem}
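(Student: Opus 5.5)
The plan runs parallel to the maximization argument, with the inequalities reversed at each stage. First, I would use the exact core--placement parametrization: every \(G\in\mathcal{L}_{\mathcal{H}}^{2}(n,k,\delta)\) is isomorphic to some \(G_h(M,B)\). Here \(M\) is the missing-edge graph on the bounded core \(X\), and \(B\) records the \(Y\)-endpoints of the exceptional \(X\)--\(Y\) edges. The degree and edge-count constraints force \(e(M)=h\). They also force each core vertex \(x\) to carry exactly \(d_M(x)\) exceptional edges, so that \(|B|=2h\) and the degree sequence of \(M\) equals the load vector on the \(X\)-side. For \(h=0\) the family is a single isomorphism class, so the statement is immediate and \(G^{\min}_{0,k,n}\cong G^\star_{0,k,n}\). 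For \(h\ge1\), there are only finitely many pairs \((M,B)\) up to isomorphism, and this number is independent of \(n\). Hence it suffices to produce a strict asymptotic ordering that singles out one pattern as strictly smallest for all large \(n\).

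Next, I would apply the uniform resolvent expansion obtained from the Schur complement of the clique block. It writes \(\rho(G_h(M,B))\) as a fixed leading part, common to all candidates, plus correction terms in decreasing powers of \(n\). The first term that depends on \(M\) is an increasing affine function of \(\|b_M\|_2^2=\sum_{x}d_M(x)^2\), with a positive coefficient; I would check its sign against the maximization case. Given \(e(M)=h\), the sum \(\sum d_M(x)^2\) is at least \(2\cdot 2h=4h\). Equality holds exactly when every degree lies in \(\{0,1\}\), that is, when \(M\) is a matching \(hK_2\) plus isolated vertices, and the vertex count of the core forces \(M\cong hK_2\cup 2K_1\). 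Any other \(M\) has \(\sum d_M^2\ge 4h+2\). Since the remainder is uniform over the finite list, such candidates have strictly larger \(\rho\) once \(n\ge n_{\mathrm{int}}\).

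With \(M=hK_2\cup2K_1\) fixed, I would turn to the first placement-sensitive coefficient. It is the squared-load functional \(\sum_{y\in Y}\ell_B(y)^2\), where \(\ell_B(y)\) counts the exceptional edges at \(y\), possibly combined with a term counting \(y\) shared by the two endpoints of a missing edge. I expect both to enter with positive coefficients. Because \(\sum_y \ell_B(y)=2h\), the quantity \(\sum\ell_B(y)^2\) is minimized, with value \(2h\), exactly when all loads are \(0\) or \(1\), i.e.\ when the \(2h\) \(Y\)-endpoints are pairwise distinct. In that case any correlation term also vanishes, so this placement is optimal in both respects. Every other placement increases the functional by at least \(2\) and is therefore strictly larger for large \(n\). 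The dispersed placement is unique up to isomorphism, since the clique \(Y\) is symmetric and the matching edges are interchangeable.

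The main obstacle is confirming that the resolvent expansion is uniform and that the gaps at each level are at least a fixed positive constant times the relevant power of \(n\). Only then do lower-order terms fail to reverse the ordering, which is what gives uniqueness of the minimizer for all \(n\ge n_{\mathrm{int}}(k,\delta)\). I would handle this exactly as in \Cref{subsec:common-threshold}, taking the maximum of the finitely many thresholds over the candidate pairs. Finally, I would compute the radius via the equitable quotient matrix of \(G^{\min}_{h,k,n}\).
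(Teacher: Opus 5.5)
Your route is the paper's: parametrize by \((M,B)\), use the \(N^{-2}\) coefficient \(b_M^{\mathsf T}b_M/(N(N-1))\) to force the matching core, then use the \(N^{-4}\) squared-load term to force the dispersed placement, and take a common threshold over the finitely many uniform comparisons. Two points need correcting.

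First, your core inequality is off by a factor of two. Since \(b_M=a\one_X+d_M\), one has \(b_M^{\mathsf T}b_M=ta^2+4ah+\sum_x d_M(x)^2\). The relevant bound is \(\sum_x d_M(x)^2\ge\sum_x d_M(x)=2h\), not \(4h\), with equality if and only if all degrees lie in \(\{0,1\}\). A non-matching core then has \(\sum_x d_M(x)^2-2h=\sum_x d_M(x)\bigl(d_M(x)-1\bigr)\ge2\). As you wrote it, the claim that \(\sum_x d_M(x)^2\ge4h\) with equality for a matching is false, since a matching gives exactly \(2h\).

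Second, the hedge about a possible correlation term should be replaced by the actual computation. Once \(M=hK_2\cup2K_1\) is fixed, \(A_M\) and \(b\) are independent of the placement. So the only placement-sensitive quantity through order \(N^{-4}\) is
\(b^{\mathsf T}R_B^{\mathsf T}\mathcal P_C R_Bb=\sum_{v\in C}\Lambda_B(v)^2-(b^{\mathsf T}b)^2/N\).
In this expression the total-load correction and the \(W\)-contribution are fixed. Moreover, \(b\equiv\beta=k-h\) on the matched vertices, and the two isolated vertices of \(M\) have no \(Y\)-neighbors. Hence \(\Lambda_B(y)=\beta r_y\) exactly, and the coefficient equals \(\beta^2\sum_y r_y^2\) plus a constant. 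No term recording whether both ends of a missing edge share a \(Y\)-neighbor appears at this order; such effects only enter at \(N^{-5}\) and are dominated.

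With these fixes, together with the observation \(|Y|\ge2h+2\), which guarantees that a dispersed placement exists, your argument coincides with the paper's proof.
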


The remainder of the paper is organized as follows. Section~2 establishes
a precise correspondence between the framework of Chang--Li--Zhang and
our core--placement model, develops the required spectral tools, proves
the two extremal theorems, and derives the sharp global
adjacency-spectral threshold. Section~3 concludes the paper.

	\section{Spectral Optimization in the Exceptional Family}
	\label{sec:ordinary}
	
	\subsection{Reduction and parametrization}
\label{subsec:clz-reduction}

Recall that, for a connected graph \(G\), \(\tau(G)\) denotes the maximum
number of pairwise edge-disjoint spanning trees in \(G\). For a partition
\(\mathcal P\) of \(V(G)\), let \(e_G(\mathcal P)\) denote the number of
edges of \(G\) whose endpoints lie in distinct parts of \(\mathcal P\).
The Nash--Williams--Tutte theorem
\cite{NashWilliams1961,Tutte1961} states that
\(\tau(G)\ge k\) if and only if every partition \(\mathcal P\) of \(V(G)\)
satisfies
\(
	e_G(\mathcal P)\ge k\bigl(|\mathcal P|-1\bigr).
\)

We follow the notation of Chang--Li--Zhang
\cite{ChangLiZhang2026}. Fix integers \(k\) and \(\delta\) satisfying
\(
	k\ge2,
	\;
	k\le\delta\le2k-1,
\)
and set
\(
h=\delta-k.
\)
Thus \(0\le h\le k-1\).

Let \(\mathcal{H}_{k,\delta}\) be the family of connected graphs on
\(2h+2\) vertices with at least
\(
	\binom{2h+2}{2}-h
\)
edges. Our optimization concerns the subfamily for which the core has
the minimum permitted number of edges.

More precisely, the graphs in
\(\mathcal{L}_{\mathcal{H}}^{2}(n,k,\delta)\) are constructed as follows.
Take pairwise disjoint vertex sets \(X,W,Y\) satisfying
\(
	|X|=2h+2,
	\;
	|W|=k-h-1,
	\;
	|Y|=n-k-h-1.
\)
Choose \(H\in\mathcal{H}_{k,\delta}\) with \(V(H)=X\) and
\(
	e(H)=\binom{2h+2}{2}-h.
\)
Starting from
\(
	K_W\vee\bigl(H\cup K_Y\bigr),
\)
add an edge set
\(
B\subseteq E(K_{X,Y})
\)
with \(|B|=2h\), subject to the requirement that every vertex of \(X\)
has degree at least \(\delta\) in the resulting graph \(G\); equivalently,
\(
	d_G(x)\ge\delta
	\; \text{for every }x\in X.
\)
Here \(K_W\) and \(K_Y\) denote the complete graphs on \(W\) and \(Y\),
respectively, \(K_{X,Y}\) denotes the complete bipartite graph with parts
\(X\) and \(Y\), and \(\vee\) denotes the graph join. We interpret
\(K_\varnothing\) as the empty graph. This construction gives precisely
the subfamily denoted by
\(\mathcal{L}_{\mathcal{H}}^{2}(n,k,\delta)\) in
\cite{ChangLiZhang2026}.

Define the family of spectral-radius maximizers by
\[
\mathcal{L}^{*}(n,k,\delta)
=
\left\{
	G\in\mathcal{L}_{\mathcal{H}}^{2}(n,k,\delta):
	\rho(G)
	=
	\max_{J\in\mathcal{L}_{\mathcal{H}}^{2}(n,k,\delta)}
	\rho(J)
\right\}.
\]

Theorem~1.4 of Chang--Li--Zhang states that, for
\(n\ge2\delta+2\), every \(G\in\mathcal{G}_{n,\delta}\) satisfying
\(\tau(G)\le k-1\) obeys
\[
e(G)\le
\binom{n+2k-2\delta-2}{2}
+\binom{2(\delta-k+1)}{2}
+(2k-\delta-1)(2\delta-2k+2)
+\delta-k,
\]
with equality if and only if
\(
	G\in\mathcal{L}_{\mathcal{H}}(n,k,\delta).
\)

By the definition of \(n_{\mathrm{CLZ}}(k,\delta)\),
Theorem~1.5 of Chang--Li--Zhang gives the following spectral reduction.
For every \(n\ge n_{\mathrm{CLZ}}(k,\delta)\), if
\(
	G\in\mathcal{G}_{n,\delta}
	\; \text{and}\;
	\widehat G\in\mathcal{L}^{*}(n,k,\delta),
\)
then
\(
	\rho(G)\ge\rho(\widehat G)
	\;\Longrightarrow\;
	\tau(G)\ge k
	\ \text{or}\
	G\in\mathcal{L}^{*}(n,k,\delta).
\)
Consequently, it remains to determine
\(\mathcal{L}^{*}(n,k,\delta)\) explicitly and to compute its common
maximum spectral radius.

    \subsubsection{Core--placement representation}
    \label{subsec:exceptional-family-dictionary}
    
    The notation in \cite{ChangLiZhang2026} is convenient for the global
    reduction, whereas the internal optimization becomes more transparent
    once the complement of the core is made explicit. With
    \(h=\delta-k\), the correspondence is
    \[
    \begin{array}{c|c|c}
    	\text{Chang--Li--Zhang object}
    	&
    	\text{notation in this paper}
    	&
    	\text{order or role}
    	\\ \hline
    	V\bigl(H_{2(\delta-k+1)}\bigr)
    	&
    	X
    	&
    	|X|=2h+2
    	\\
    	V\bigl(K_{2k-\delta-1}\bigr)
    	&
    	W
    	&
    	|W|=k-h-1
    	\\
    	V\bigl(K_{n-\delta-1}\bigr)
    	&
    	Y
    	&
    	|Y|=n-k-h-1
    	\\
    	H_{2(\delta-k+1)}^{c}
    	&
    	M
    	&
    	\text{missing-edge graph on \(X\)}
    	\\
    	E_G(X,Y)
    	&
    	B
    	&
    	\text{bipartite graph of exceptional \(X\)--\(Y\) edges}.
    \end{array}
    \]
    Here the complement in the fourth row is taken with respect to the
    complete graph on \(X\). The next proposition makes this correspondence
    precise and records the numerical constraints used throughout the paper.
    
   \begin{proposition}
	\label{prop:LH2-reparametrization}
	Fix integers \(k\) and \(\delta\) satisfying
	\(
		k\ge2,
		\;
		k\le\delta\le2k-1,
	\)
	set \(h=\delta-k\), and let \(n\ge2\delta+2\). Every graph
	\(
	G\in\mathcal{L}_{\mathcal{H}}^{2}(n,k,\delta)
	\)
	admits a partition
	\(
		V(G)=X\mathbin{\dot\cup}W\mathbin{\dot\cup}Y
	\)
	satisfying
	\(
		|X|=2h+2,
		\;
		|W|=k-h-1,
		\;
		|Y|=n-k-h-1,
	\)
	such that \(W\cup Y\) induces a clique, \(W\) is complete to \(X\),
	and
	\(
		E(G[X])=E(K_X)\setminus E(M)
	\)
	for a simple graph \(M\) on \(X\).

	Let \(B\) be the bipartite graph on \(X\cup Y\) with bipartition
	\((X,Y)\) and edge set \(E_G(X,Y)\). Then
	\(
		e(M)=h,
		\;
		e(B)=2h,
		\;
		d_B(x)=d_M(x)
		\; \text{for every }x\in X.
	\)

	Conversely, suppose that pairwise disjoint sets \(X,W,Y\) have the
	displayed cardinalities, and that \(M\) and \(B\) satisfy the
	conditions above. Define \(G_h(M,B)\) on
	\(X\mathbin{\dot\cup}W\mathbin{\dot\cup}Y\) by requiring that
	\(
		E\bigl(G_h(M,B)[X]\bigr)
		=E(K_X)\setminus E(M),
	\)
	that \(W\cup Y\) induce a clique, that \(W\) be complete to \(X\),
	and that the \(X\)--\(Y\) edges be precisely the edges of \(B\).
	Then
	\(
		G_h(M,B)
		\in\mathcal{L}_{\mathcal{H}}^{2}(n,k,\delta).
	\)
	Moreover, every vertex of \(X\) has degree exactly \(\delta\), whereas
	every vertex of \(W\cup Y\) has degree at least \(\delta\).
	Consequently,
	\(
		\delta\bigl(G_h(M,B)\bigr)=\delta.
	\)
\end{proposition}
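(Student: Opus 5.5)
The proof is a direct unpacking of the construction of \(\mathcal{L}_{\mathcal{H}}^{2}(n,k,\delta)\) recalled above, followed by degree counting.

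For the forward direction, take \(G\in\mathcal{L}_{\mathcal{H}}^{2}(n,k,\delta)\) and the sets \(X,W,Y\), the core \(H\) and the edge set \(B\) from its construction, and put \(M=K_X-E(H)\). Since \(G\) contains \(K_W\vee(H\cup K_Y)\) plus \(B\subseteq E(K_{X,Y})\), the set \(W\cup Y\) induces a clique, \(W\) is complete to \(X\), \(E(G[X])=E(H)=E(K_X)\setminus E(M)\), and \(E_G(X,Y)=B\). The counts \(e(M)=\binom{2h+2}{2}-e(H)=h\) and \(e(B)=|B|=2h\) are immediate.

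The degree identity is where the argument has content. For \(x\in X\),
\[
d_G(x)=(2h+1-d_M(x))+(k-h-1)+d_B(x)=\delta-d_M(x)+d_B(x),
\]
using \(k+h=\delta\). The construction requires \(d_G(x)\ge\delta\), which gives \(d_B(x)\ge d_M(x)\) for every \(x\in X\). Summing over \(X\),
\[
\sum_{x\in X} d_B(x)=e(B)=2h=2e(M)=\sum_{x\in X} d_M(x).
\]
Since each termwise difference is nonnegative and the sums agree, \(d_B(x)=d_M(x)\) for all \(x\in X\). This double-counting step is the only non-routine point.

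For the converse, let \(M\) and \(B\) satisfy the stated conditions and set \(H=K_X-E(M)\). Then \(e(H)=\binom{2h+2}{2}-h\), and \(G_h(M,B)\) is exactly \(K_W\vee(H\cup K_Y)\) with the \(2h\) edges of \(B\) added. By the displayed formula, every \(x\in X\) has degree exactly \(\delta\), so the degree requirement holds.

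It remains to check \(H\in\mathcal{H}_{k,\delta}\), that is, that \(H\) is connected.
\begin{itemize}
\item If \(h=0\), then \(H=K_2\), which is connected.
\item If \(h\ge1\), each vertex has \(d_M(x)\le e(M)=h\), so \(d_H(x)\ge h+1\), which is at least half of \(|X|=2h+2\). By Dirac's connectivity criterion (minimum degree \(\ge (N-1)/2\) forces connectivity), \(H\) is connected.
\end{itemize}
Hence \(G_h(M,B)\in\mathcal{L}_{\mathcal{H}}^{2}(n,k,\delta)\).

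Finally, consider degrees in \(W\cup Y\).
\begin{itemize}
\item A vertex of \(W\) has degree \(n-1\ge\delta\).
\item A vertex of \(Y\) has degree at least \(|W|+|Y|-1=n-2h-3\). The hypothesis \(n\ge2\delta+2\) gives \(n-2h-3\ge 2k-1\ge\delta\).
\end{itemize}
Combined with the degrees on \(X\), this gives \(\delta(G_h(M,B))=\delta\). Connectivity of \(G_h(M,B)\) follows because \(W\cup Y\) is a clique and every \(X\)-vertex has a neighbor in \(W\), or, when \(W=\varnothing\) (i.e. \(h=k-1\)), because \(H\) is connected and \(B\) is nonempty (\(h\ge1\)).
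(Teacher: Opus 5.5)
Your proposal is correct and follows the paper's proof essentially step for step: the same degree formula $d_G(x)=\delta+d_B(x)-d_M(x)$, the same double count $\sum_{x\in X}(d_B(x)-d_M(x))=e(B)-2e(M)=0$ forcing $d_B=d_M$, and the same degree checks on $W\cup Y$ using $n\ge 2\delta+2$. There are two local differences, and both variants are equally valid: you prove connectivity of $K_X-M$ via the minimum-degree bound $d_H(x)\ge h+1\ge(|X|-1)/2$, whereas the paper uses that $K_{2h+2}$ has edge-connectivity $2h+1>h$; and you split the connectivity argument for $G_h(M,B)$ by whether $W$ is empty, whereas the paper splits by whether $h=0$.
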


\begin{proof}
	By the defining construction of
	\(\mathcal{L}_{\mathcal{H}}^{2}(n,k,\delta)\),
	\(
		|X|
		=
		2(\delta-k+1)
		=
		2h+2,
	\)
	and
	\(
		|W|
		=
		2k-\delta-1
		=
		k-h-1,
		\;
		|Y|
		=
		n-\delta-1
		=
		n-k-h-1.
	\)
	The join in the Chang--Li--Zhang construction implies that
	\(W\cup Y\) induces a clique and that \(W\) is complete to \(X\).

	Define the missing-edge graph \(M\) on \(X\) by
	\(
		E(M)=E(K_X)\setminus E(G[X]).
	\)
	Since the core has the minimum admissible number of edges,
	\(
		e(G[X])=\binom{2h+2}{2}-h,
	\)
	and hence \(e(M)=h\). For every \(x\in X\),
	\begin{align*}
		d_G(x)
		&=
		\bigl(2h+1-d_M(x)\bigr)
		+(k-h-1)
		+d_B(x)\\
		&=
		\delta+d_B(x)-d_M(x).
	\end{align*}
	Since \(d_G(x)\ge\delta\), we have
	\(
		d_B(x)\ge d_M(x)
		\; \text{for every }x\in X.
	\)
	Each edge of \(B\) has exactly one endpoint in \(X\); therefore,
	\(
		\sum_{x\in X}d_B(x)=e(B).
	\)
	Together with the handshaking lemma applied to \(M\), this gives
	\[
		\sum_{x\in X}\bigl(d_B(x)-d_M(x)\bigr)
		=
		e(B)-2e(M)
		=
		2h-2h
		=
		0.
	\]
	Each summand is nonnegative, so every summand must be zero.
	Consequently,
	\(
		d_B(x)=d_M(x)
		\; \text{for every }x\in X.
	\)

	Conversely, suppose that \(X,W,Y,M,B\) satisfy all the stated
	conditions, and consider \(G_h(M,B)\). Set
	\(
		H=K_X-M.
	\)
	The edge-connectivity of \(K_{2h+2}\) is \(2h+1\). Since
	\(h<2h+1\), deleting the \(h\) edges of \(M\) cannot disconnect
	\(K_{2h+2}\). Thus \(H\) is connected. Moreover,
	\(
		e(H)=\binom{2h+2}{2}-h,
	\)
	so \(H\in\mathcal{H}_{k,\delta}\).

	The graph \(G_h(M,B)\) is connected. Indeed, if \(h\ge1\), then
	\(e(B)=2h>0\), so at least one edge of \(B\) connects the connected
	graph \(H\) to the clique induced by \(W\cup Y\). If \(h=0\), then
	\(
		|W|=k-1>0,
	\)
	and every vertex of \(W\) is adjacent to every vertex of \(X\cup Y\).

	For every \(x\in X\), the equality \(d_B(x)=d_M(x)\) gives
	\begin{align*}
		d_{G_h(M,B)}(x)
		&=
		\bigl(2h+1-d_M(x)\bigr)
		+(k-h-1)
		+d_B(x)\\
		&=
		k+h
		=
		\delta.
	\end{align*}
	Every vertex of \(W\) is universal. For every \(y\in Y\),
	\(
		d_{G_h(M,B)}(y)
		\ge
		|W|+|Y|-1
		=
		n-2h-3.
	\)
	Since \(n\ge2\delta+2=2k+2h+2\) and \(h\le k-1\),
	\(
		n-2h-3
		\ge
		2k-1
		\ge
		k+h
		=
		\delta.
	\)
	Thus every vertex outside \(X\) has degree at least \(\delta\), while
	every vertex of \(X\) has degree exactly \(\delta\). It follows that
	\(
		\delta\bigl(G_h(M,B)\bigr)=\delta.
	\)
	All the defining conditions of
	\(\mathcal{L}_{\mathcal{H}}^{2}(n,k,\delta)\) are therefore
	satisfied.
\end{proof}
    
   For fixed labeled sets \(X,W,Y\), the preceding construction gives a
bijection between the admissible pairs \((M,B)\) and the graphs obtained
on this prescribed vertex partition. Indeed, \(M\) and \(B\) are uniquely
recovered from \(G\) by
\(
	E(M)=E(K_X)\setminus E(G[X]),
	\;
	V(B)=X\cup Y,
	\;
	E(B)=E_G(X,Y).
\)
We therefore denote the graph corresponding to an admissible pair
\((M,B)\) by
\(
	G_h(M,B).
\)
Relabelings within \(X\) and \(Y\), applied consistently to \(M\) and
\(B\), yield isomorphic graphs \(G_h(M,B)\); the vertices of \(W\) may
also be relabeled arbitrarily.

\begin{remark}[Finiteness of the core--placement patterns]
	\label{rem:finite-patterns}
	For fixed \(h\) and \(k\), only finitely many patterns \((M,B)\)
	must be compared up to relabeling, and their number is independent
	of \(n\) for \(n\ge2\delta+2\). Indeed, \(M\) has \(h\) edges on the
	fixed vertex set \(X\), where
	\(
		|X|=2h+2.
	\)
	Define the active and inactive subsets of \(Y\) by
	\(
		Y_{\mathrm{act}}
		=
		\{y\in Y:d_B(y)>0\},
		\;
		Y_0=Y\setminus Y_{\mathrm{act}}.
	\)
	Since \(e(B)=2h\), we have
	\(
		|Y_{\mathrm{act}}|\le2h.
	\)
	Thus the nonisolated part of \(B\) is a bipartite graph whose two
	parts have orders \(2h+2\) and at most \(2h\), respectively.

	Every vertex of \(Y_0\) has no neighbors in \(X\). Since \(W\cup Y\)
	induces a clique, every \(y\in Y_0\) satisfies
	\(
		N_{G_h(M,B)}[y]=W\cup Y.
	\)
	Consequently, \(Y_0\) is a true-twin class in \(G_h(M,B)\).

	The standing assumption \(n\ge2\delta+2\) gives
	\(
		|Y|
		=
		n-k-h-1
		\ge
		k+h+1
		\ge
		2h+2,
	\)
	where the last inequality follows from \(h\le k-1\). Hence \(Y\)
	always contains enough vertices to realize every admissible
	attachment pattern.

	Up to relabeling, an admissible pair is therefore determined by a
	graph \(M\) on \(2h+2\) vertices and the nonisolated part of a
	bipartite graph \(B\) whose \(Y\)-side has order at most \(2h\).
	There are only finitely many such pairs, with their number bounded
	by a constant depending only on \(h\). Increasing \(n\) merely adds
	vertices to the inactive true-twin class \(Y_0\). This finiteness
	underlies the uniform comparisons developed below.
\end{remark}
	
	\begin{proposition}
	\label{prop:packing-obstruction}
	Fix integers \(k\) and \(\delta\) satisfying
	\(
		k\ge2,
		\;
		k\le\delta\le2k-1,
	\)
	set \(h=\delta-k\), and let \(n\ge2\delta+2\). Then every graph
	\(
		G\in\mathcal{L}_{\mathcal{H}}^{2}(n,k,\delta)
	\)
	satisfies
	\(
		\tau(G)\le k-1.
	\)
	More precisely, there exists a partition \(\mathcal P_X\) of \(V(G)\)
	such that
	\(
		e_G(\mathcal P_X)
		=
		k\bigl(|\mathcal P_X|-1\bigr)-1.
	\)
\end{proposition}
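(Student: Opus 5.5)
The plan is to exhibit one explicit partition and count its crossing edges using the parametrization from \Cref{prop:LH2-reparametrization}. Write \(G=G_h(M,B)\) with vertex partition \(X\mathbin{\dot\cup}W\mathbin{\dot\cup}Y\). Let \(\mathcal P_X\) consist of the \(2h+2\) singletons \(\{x\}\), one for each \(x\in X\), together with the single part \(W\cup Y\). This part is nonempty, since \(|Y|\ge 2h+2\) by \Cref{rem:finite-patterns}. Then \(|\mathcal P_X|=2h+3\), so \(k(|\mathcal P_X|-1)=k(2h+2)\).

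An edge crosses \(\mathcal P_X\) exactly when it is not contained in \(G[W\cup Y]\). The crossing edges therefore split into three disjoint classes.
\begin{itemize}
	\item The edges of \(G[X]\). There are \(\binom{2h+2}{2}-h=(h+1)(2h+1)-h\) of them, because \(e(M)=h\).
	\item The \(X\)--\(W\) edges. There are \((2h+2)(k-h-1)\) of them, because \(W\) is complete to \(X\).
	\item The \(X\)--\(Y\) edges. These are exactly the edges of \(B\), so there are \(e(B)=2h\) of them.
\end{itemize}
Adding the three counts gives
\[
	e_G(\mathcal P_X)=(h+1)(2h+1)+h+2(h+1)(k-h-1)
	=(h+1)(2k-1)+h
	=2k(h+1)-1 .
\]
This is exactly \(k(|\mathcal P_X|-1)-1\).

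Since \(\mathcal P_X\) violates the Nash--Williams--Tutte condition for \(k\), it follows that \(\tau(G)\le k-1\). There is no real obstacle here. The only points needing care are that the edge classes are disjoint and exhaust the crossing edges, and that the identities \(e(M)=h\) and \(e(B)=2h\) from \Cref{prop:LH2-reparametrization} are used correctly. The argument applies equally when \(h=0\): then \(B\) is empty, \(X\) has two vertices joined by an edge, and the count gives \(1+2(k-1)=2k-1\).
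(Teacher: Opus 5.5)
Your proposal is correct and follows essentially the same route as the paper: the partition into the singletons of \(X\) plus the single part \(W\cup Y\), with crossing edges counted as \(\binom{2h+2}{2}-h\) edges of \(G[X]\), \((2h+2)(k-h-1)\) edges of \(X\) to \(W\), and \(e(B)=2h\) edges of \(X\) to \(Y\), which total \(k(2h+2)-1\). Your separate check of the case \(h=0\) and your justification that \(W\cup Y\) is nonempty (via \(|Y|\ge 2h+2\), where the paper uses \(|Y|\ge\delta+1\)) are both fine.
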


\begin{proof}
	By \Cref{prop:LH2-reparametrization}, choose a representation
	\(
		G=G_h(M,B)
	\)
	with associated partition
	\(
		V(G)=X\mathbin{\dot\cup}W\mathbin{\dot\cup}Y.
	\)
	Set
	\(
		t:=|X|=2h+2,
		\;
		a:=|W|=k-h-1,
		\;
		C:=W\cup Y.
	\)
	Notice that
	\(
		|Y|=n-\delta-1\ge\delta+1>0,
	\)
	so \(C\neq\varnothing\). Consider the partition
	\(
		\mathcal P_X
		=
		\{C\}\cup\bigl\{\{x\}:x\in X\bigr\}.
	\)
	Every edge of \(G[X]\), as well as every edge between \(X\) and
	\(C\), crosses \(\mathcal P_X\), whereas no edge with both endpoints
	in \(C\) crosses the partition. Consequently,
	\begin{align*}
		e_G(\mathcal P_X)
		&=
		e(G[X])+e_G(X,C)\\
		&=
		\left(\binom{t}{2}-h\right)
		+\bigl(at+e(B)\bigr)\\
		&=
		\left(\binom{t}{2}-h\right)
		+(at+2h)\\
		&=
		\binom{2h+2}{2}
		+(k-h-1)(2h+2)+h\\
		&=
		k(2h+2)-1.
	\end{align*}
	Since
	\(
		|\mathcal P_X|-1=|X|=2h+2,
	\)
	it follows that
	\(
		e_G(\mathcal P_X)
		=
		k\bigl(|\mathcal P_X|-1\bigr)-1
		<
		k\bigl(|\mathcal P_X|-1\bigr).
	\)
	Thus \(\mathcal P_X\) violates the Nash--Williams--Tutte condition
	for the existence of \(k\) pairwise edge-disjoint spanning trees.
	Therefore \(\tau(G)<k\). Since \(\tau(G)\) is an integer, we conclude
	that
	\(
		\tau(G)\le k-1.
	\)
\end{proof}
	
   \subsubsection{Quotient matrices of equitable partitions}
\label{subsec:spectral-tools}

We record the following standard lemma on equitable partitions, which
will be used later to compute the two extremal spectral radii; see also
\cite{CvetkovicRowlinsonSimic2010,ChangLiZhang2026}.

\begin{lemma}
	\label{lem:equitable-quotient}
	Let \(G\) be a connected graph, and let
	\(
		V(G)=V_1\mathbin{\dot\cup}\cdots\mathbin{\dot\cup}V_m
	\)
	be an equitable partition into nonempty classes. Suppose that every
	vertex in \(V_i\) has exactly \(q_{ij}\) neighbors in \(V_j\), and let
	\(
		Q=(q_{ij})_{1\le i,j\le m}
	\)
	be the corresponding quotient matrix. Then
	\(
		\rho(G)=\rho(Q).
	\)
\end{lemma}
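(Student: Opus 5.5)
The plan is the standard argument for equitable partitions: identify the spectrum of \(Q\) inside the spectrum of \(A=A(G)\), and then use Perron--Frobenius to show that the largest eigenvalue is shared.

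First, form the \(n\times m\) characteristic matrix \(P\), whose column \(j\) is the indicator vector of \(V_j\). Equitability, namely that each vertex of \(V_i\) has exactly \(q_{ij}\) neighbors in \(V_j\), is equivalent to the identity
\[
AP=PQ.
\]
To check it, compare the \((v,j)\) entries. For \(v\in V_i\), the left side counts the neighbors of \(v\) in \(V_j\), and the right side equals \(q_{ij}\). Consequently, if \(Qz=\lambda z\) with \(z\neq0\), then \(A(Pz)=\lambda Pz\). Since the classes are nonempty, the columns of \(P\) are nonzero with disjoint supports, so \(P\) has full column rank and \(Pz\neq0\). Every eigenvalue of \(Q\) is therefore an eigenvalue of \(A\), and in particular \(\rho(Q)\le\rho(G)\).

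For the reverse inequality, I would avoid comparing the full spectra and use nonnegativity instead. The matrix \(Q\) is nonnegative. It is also irreducible: if some class \(V_j\) could not be reached from \(V_i\) in the digraph of \(Q\), then no path in \(G\) could lead from \(V_i\) to \(V_j\), contradicting connectivity. By Perron--Frobenius, \(Q\) has a positive eigenvector \(z\) for the eigenvalue \(\rho(Q)\). Then \(Pz\) is a strictly positive eigenvector of \(A\) with eigenvalue \(\rho(Q)\). Since \(A\) is a nonnegative irreducible matrix (because \(G\) is connected), the only eigenvalue of \(A\) that admits a positive eigenvector is \(\rho(A)\). Hence \(\rho(Q)=\rho(G)\).

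Alternatively, one can take the positive Perron vector \(x\) of \(A\) and average it over the classes, \(z_i=|V_i|^{-1}\sum_{v\in V_i}x_v\). This gives \(Q^{\top}\)-type relations, and it requires the symmetrization \(D^{1/2}QD^{-1/2}\) with \(D=\mathrm{diag}(|V_i|)\). The first route is cleaner.

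The only step needing care is the uniqueness of the positive eigenvector, which is where connectivity is used. I would cite the Perron--Frobenius theorem: for an irreducible nonnegative matrix, any nonnegative eigenvector corresponds to the spectral radius. To see this directly, pair the eigenvector with the positive left Perron vector \(y\) of \(A\). From \(y^{\top}A(Pz)=\rho(A)\,y^{\top}Pz=\rho(Q)\,y^{\top}Pz\) and \(y^{\top}Pz>0\), it follows that \(\rho(A)=\rho(Q)\).
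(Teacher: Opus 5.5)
Your proposal is correct and follows essentially the same route as the paper: your lifted vector \(Pz\) is exactly the paper's \(\widetilde{\mathbf z}\), and both arguments finish by applying Perron--Frobenius to the irreducible matrix \(A(G)\), which has the positive eigenvector \(Pz\) for the eigenvalue \(\rho(Q)\). Your preliminary inequality \(\rho(Q)\le\rho(G)\) obtained from \(AP=PQ\) is not needed, and the left-Perron-vector pairing only makes the final uniqueness step self-contained; neither changes the underlying argument.
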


\begin{proof}
	Since \(G\) is connected, the quotient graph obtained by contracting
	each class \(V_i\) to a single vertex is connected. Hence the support
	digraph of the nonnegative matrix \(Q\) is strongly connected, and
	therefore \(Q\) is irreducible.

	Let
	\(
		\mathbf z=(z_1,\ldots,z_m)^{\mathsf T}>0
	\)
	be a Perron vector of \(Q\), so that
	\(
		Q\mathbf z=\rho(Q)\mathbf z.
	\)
	Define \(\widetilde{\mathbf z}\in\mathbb R^{V(G)}\) by
	\(
		\widetilde z_v=z_i
		\; \text{whenever }v\in V_i.
	\)
	For every \(v\in V_i\), equitability gives
	\[
		\bigl(A(G)\widetilde{\mathbf z}\bigr)_v
		=
		\sum_{j=1}^{m}q_{ij}z_j
		=
		(Q\mathbf z)_i
		=
		\rho(Q)z_i
		=
		\rho(Q)\widetilde z_v.
	\]
	Consequently,
	\(
		A(G)\widetilde{\mathbf z}
		=
		\rho(Q)\widetilde{\mathbf z}.
	\)
	Since \(\widetilde{\mathbf z}>0\) and \(A(G)\) is irreducible, the
	Perron--Frobenius theorem implies that the corresponding eigenvalue
	\(\rho(Q)\) is the spectral radius of \(A(G)\). Therefore
	\(
		\rho(G)=\rho(Q).
	\)
\end{proof}
    
	\subsection{Spectral expansion for a bounded core attached to a large clique}
\label{subsec:large-clique-expansion}

For every finite index set \(U\), let \(\one_U\) denote the all-ones
column vector indexed by \(U\). For each positive integer \(s\), let
\(I_s\) and \(J_s\) denote the \(s\times s\) identity matrix and the
\(s\times s\) all-ones matrix, respectively.

Let \(G_h(M,B)\) be the graph associated with an admissible pair
\((M,B)\) as in \Cref{prop:LH2-reparametrization}. Set
\(
	t:=|X|=2h+2,
	\;
	a:=|W|=k-h-1,
	\;
	C:=W\cup Y,
	\;
	N:=|C|=n-2h-2.
\)
For \(x\in X\), define
\(
	d_C(x):=|N_G(x)\cap C|.
\)
By \Cref{prop:LH2-reparametrization},
\(
	d_C(x)
	=
	|W|+d_B(x)
	=
	a+d_M(x).
\)
We therefore define \(b_M\in\mathbb R^X\) by
\begin{equation}
	\label{eq:bM}
	b_M(x)
	=
	d_C(x)
	=
	a+d_M(x)
	=
	k-h-1+d_M(x).
\end{equation}

Let
\(
	A_M:=A(K_X-M),
\)
and let \(R\) be the \(N\times t\) bipartite adjacency matrix between
\(C\) and \(X\), with rows indexed by \(C\) and columns indexed by \(X\).
Thus
\[
	R_{cx}
	=
	\begin{cases}
		1, & cx\in E(G_h(M,B)),\\
		0, & cx\notin E(G_h(M,B)).
	\end{cases}
\]
It follows immediately that
\(
	R^{\mathsf T}\one_C=b_M.
\)

Unless explicitly indicated by a subscript \(F\), all matrix norms in
this subsection are operator \(2\)-norms, and all vector norms are
Euclidean norms. Since \(A_M\) is the adjacency matrix of a graph on
\(t\) vertices,
\(
	\|A_M\|
	\le
	\Delta(K_X-M)
	\le
	t-1.
\)
Moreover, \(R\) has exactly
\(
	at+e(B)=at+2h
\)
nonzero entries. Hence
\(
	\|R\|
	\le
	\|R\|_F
	=
	\sqrt{at+2h}.
\)
Finally, since \(M\) has \(h\) edges, \(d_M(x)\le h\) for every
\(x\in X\). Therefore
\(
	\|R^{\mathsf T}\one_C\|
	=
	\|b_M\|
	\le
	\sqrt{t}\,(a+h).
\)
Consequently,
\(
	\|A_M\|
	+\|R\|
	+\|R^{\mathsf T}\one_C\|
	\le
	K_{h,k},
\)
where
\begin{equation}
	\label{eq:explicit-uniform-K}
	K_{h,k}
	:=
	(t-1)+\sqrt{at+2h}+\sqrt{t}\,(a+h).
\end{equation}
The constant \(K_{h,k}\) is independent of \(M\), \(B\), and \(n\).
Thus the hypotheses of the next lemma hold uniformly throughout the
exceptional family.

Here and below, the notation
\(
	E_N=O_{t,K}(N^{-r})
\)
uniformly over the exceptional family means that there exist constants
\(C_r(t,K)>0\) and \(N_r(t,K)\) such that
\(
	|E_N|
	\le
	C_r(t,K)N^{-r}
\)
for every admissible pair \((M,B)\) and every
\(N\ge N_r(t,K)\). For vector- or matrix-valued remainders, the absolute
value is replaced by the corresponding Euclidean or operator norm.
The constants \(C_r(t,K)\) and \(N_r(t,K)\) are independent of
\(M\), \(B\), and \(n\).
	
	\begin{lemma}[A large clique with a bounded exceptional part]
	\label{lem:large-clique}
	Let \(G_N\) be a connected graph with a partition
	\(
		V(G_N)=C\mathbin{\dot\cup}X,
	\)
	where \(C\) induces a clique of order \(N\) and
	\(
	|X|=t\ge1
	\)
	is fixed. Ordering the vertices of \(C\) first and those of \(X\)
	second, suppose that
	\[
		A(G_N)
		=
		\begin{pmatrix}
			J_N-I_N & R\\
			R^{\mathsf T} & A_X
		\end{pmatrix}.
	\]
	Assume that
	\(
		\|R\|+\|A_X\|+\|R^{\mathsf T}\one_C\|\le K,
	\)
	where \(K\) is independent of \(N\), and set
	\(
		b:=R^{\mathsf T}\one_C.
	\)
	Then, uniformly over all such \(R\) and \(A_X\), whenever
	\(
		N\ge5\max\{K,1\}+2,
	\)
	we have
	\begin{align}
		\rho(G_N)
		={}&
		N-1
		+\frac{b^{\mathsf T}b}{N(N-1)}
		+\frac{b^{\mathsf T}A_Xb}{N(N-1)^2}
		+O_{t,K}(N^{-4}),
		\label{eq:large-clique-expansion}\\
		\rho(G_N)
		={}&
		N-1
		+\frac{b^{\mathsf T}b}{N(N-1)}
		+\frac{b^{\mathsf T}A_Xb}{N(N-1)^2}
		+\frac{b^{\mathsf T}A_X^2b}{N(N-1)^3}
		\notag\\
		&\quad
		+\frac{
			b^{\mathsf T}R^{\mathsf T}\mathcal P_C Rb
		}{
			N^2(N-1)^2
		}
		+O_{t,K}(N^{-5}),
		\label{eq:fourth-order}
	\end{align}
	where
	\(
		\mathcal P_C
		:=
		I_N-\frac{1}{N}\one_C\one_C^{\mathsf T}
	\)
	is the orthogonal projection onto \(\one_C^\perp\).

	In particular, the first correction term that may depend on the
	candidate is
	\(
		\frac{b^{\mathsf T}b}{N(N-1)}.
	\)
	Once \(A_X\) and \(b\) are fixed, the first quantity that may depend
	on the placement encoded by \(R\) is
	\(
		b^{\mathsf T}R^{\mathsf T}\mathcal P_C Rb
		=
		\|\mathcal P_C Rb\|^2.
	\)
\end{lemma}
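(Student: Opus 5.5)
The plan is to eliminate the clique block with a Schur complement, turn the eigenvalue equation into a scalar fixed-point equation for the excess \(\mu:=\rho(G_N)-(N-1)\), and expand that equation in a Neumann series in \(1/N\).

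\emph{Preliminary bounds on \(\mu\).} Testing the Rayleigh quotient on \(\one_C\) (padded with zeros on \(X\)) gives \(\rho(G_N)\ge N-1\), so \(\mu\ge0\). Since \(G_N\) is connected and \(t\ge1\), we have \(b\neq0\). For the upper bound I will use the fixed-point equation below.

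\emph{Step 1: Schur complement.} The eigenvalues of \(J_N-I_N\) are \(N-1\) (once) and \(-1\). Hence for \(\lambda>N-1\) the matrix \(\lambda I_N-(J_N-I_N)\) is invertible, with
\[
\bigl(\lambda I_N-J_N+I_N\bigr)^{-1}=\frac{\mathcal P_C}{\lambda+1}+\frac{\one_C\one_C^{\mathsf T}}{N(\lambda+1-N)} .
\]
Therefore \(\lambda>N-1\) is an eigenvalue of \(A(G_N)\) if and only if \(\det S(\lambda)=0\), where
\[
S(\lambda)=D(\lambda)-\frac{bb^{\mathsf T}}{N\mu},\qquad D(\lambda):=\lambda I_t-A_X-\frac{R^{\mathsf T}\mathcal P_C R}{\lambda+1},
\]
and we use \(R^{\mathsf T}\one_C=b\) and \(\lambda+1-N=\mu\).

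For \(\lambda\ge N-1\) and \(N\ge5\max\{K,1\}+2\) we have \(\|A_X\|+\|R\|^2/(\lambda+1)\le K+K^2/N<\lambda/2\). So \(D(\lambda)\) is positive definite, and \(\|D(\lambda)^{-1}\|\le2/\lambda\). The matrix determinant lemma then reduces \(\det S(\lambda)=0\) to the scalar equation
\[
\mu=\frac{1}{N}\,b^{\mathsf T}D(N-1+\mu)^{-1}b .
\]

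\emph{Step 2: the root is \(\rho\) and it is small.} On \((0,\infty)\) the left side \(\mu\) increases strictly. The right side is positive and nonincreasing in \(\mu\), because \(D\) increases in the Loewner order. So there is exactly one root \(\mu_*>0\). Eigenvalues \(\le N-1\) cannot exceed \(N-1+\mu_*\), and \(\rho\ge N-1\) with \(\rho=N-1\) impossible (that would force \(b=0\) by the same Schur argument, using a limiting or direct eigenvector check). Hence \(\mu=\mu_*\). The equation immediately gives \(\mu\le 2\|b\|^2/(N(N-1))=O_{t,K}(N^{-2})\), uniformly.

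\emph{Step 3: Neumann expansion.} Write \(D^{-1}=\lambda^{-1}\sum_{j\ge0}E^j\) with \(E:=\lambda^{-1}A_X+\bigl(\lambda(\lambda+1)\bigr)^{-1}R^{\mathsf T}\mathcal P_C R\) and \(\|E\|\le 2K/N<1/2\). Keep the terms \(b^{\mathsf T}b/\lambda\), \(b^{\mathsf T}A_Xb/\lambda^2\), \(b^{\mathsf T}A_X^2b/\lambda^3\) and \(b^{\mathsf T}R^{\mathsf T}\mathcal P_CRb/(\lambda^2(\lambda+1))\). Every other term, including the geometric tail, is bounded by \(\|b\|^2\,O(K^3/N^4)\), so after dividing by \(N\) it is \(O(N^{-5})\).

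Then substitute \(\lambda=N-1+\mu\) with \(\mu=O(N^{-2})\). Replacing \(1/\lambda\) by \(1/(N-1)\) costs a relative error of \(O(N^{-3})\), so in the leading term the error is \(O(N^{-2}\cdot N^{-3})=O(N^{-5})\). Replacing \(\lambda^2(\lambda+1)\) by \((N-1)^2N\) in the projection term costs \(O(N^{-6})\). This yields \eqref{eq:fourth-order}. Truncating one order earlier, and noting that the \(A_X^2\) and projection terms are \(O(N^{-4})\), yields \eqref{eq:large-clique-expansion}.

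The main obstacle is the bookkeeping in Steps 2–3. One has to make sure that every constant depends only on \(t\) and \(K\), never on \(R\), \(A_X\) or \(N\). One also has to confirm that the feedback of \(\mu\) into \(\lambda\) inside \(D(\lambda)^{-1}\) enters only at order \(N^{-5}\). The a priori bound \(\mu=O(N^{-2})\) from the fixed-point equation is exactly what closes this loop.
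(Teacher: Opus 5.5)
Your proposal is correct and follows essentially the paper's route. The paper likewise eliminates the clique block to reach exactly your scalar equation \(\mu=N^{-1}b^{\mathsf T}D(\lambda)^{-1}b\) (written there as \(s=N^{-1}b^{\mathsf T}y\) with \(D(\lambda)y=b\)), bootstraps \(\mu=O(N^{-2})\), and then Neumann-expands to the same four retained terms. The only difference is in how the equation is tied to \(\rho\): the paper reads it off the block equations of the positive Perron vector and gets the spectral gap from Weyl's inequality, whereas you use the matrix determinant lemma plus monotonicity to show \(N-1+\mu_*\) is the unique eigenvalue above \(N-1\), which also yields the a priori bound on \(\mu\) directly. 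Your separate claim that \(\rho=N-1\) is impossible is redundant, since the root \(\mu_*>0\) already exhibits an eigenvalue above \(N-1\).
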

	
	\begin{proof}
	Write
	\[
	A_0
	=
	\begin{pmatrix}
		J_N-I_N & 0\\
		0 & 0
	\end{pmatrix},
	\qquad
	E
	=
	\begin{pmatrix}
		0 & R\\
		R^{\mathsf T} & A_X
	\end{pmatrix}.
	\]
	The spectrum of \(A_0\) consists of the eigenvalue \(N-1\), the
	eigenvalue \(-1\) with multiplicity \(N-1\), and the eigenvalue \(0\)
	with multiplicity \(t\). In particular, since \(t\ge1\),
	\(
	\lambda_2(A_0)=0.
	\)
	Moreover,
	\begin{align*}
		\|E\|
		&\le
		\left\|
		\begin{pmatrix}
			0 & R\\
			R^{\mathsf T} & 0
		\end{pmatrix}
		\right\|
		+
		\left\|
		\begin{pmatrix}
			0 & 0\\
			0 & A_X
		\end{pmatrix}
		\right\|\\
		&=
		\|R\|+\|A_X\|\\
		&\le K.
	\end{align*}
	By Weyl's inequality,
	\(
		\lambda_1(A_0+E)
		\in[N-1-K,N-1+K],
	\)
	whereas
	\(
		\lambda_2(A_0+E)
		\le
		\lambda_2(A_0)+\|E\|
		\le K.
	\)
	If \(N>2K+1\), then
	\(
		N-1-K>K,
	\)
	so the largest eigenvalue is separated from the remainder of the
	spectrum. Denote this eigenvalue by
	\(
		\lambda=\rho(G_N).
	\)
	In particular,
	\(
		\lambda=N-1+O_{t,K}(1).
	\)

	Let \((u,y)^{\mathsf T}\) be a positive Perron vector of \(G_N\),
	normalized so that
	\(
		\frac{1}{N}\one_C^{\mathsf T}u=1.
	\)
	Write
	\(
		u=\one_C+z,
		\;
		\one_C^{\mathsf T}z=0.
	\)
	The two block equations are
	\(
		(J_N-I_N)u+Ry=\lambda u
	\)
	and
	\(
		R^{\mathsf T}u+A_Xy=\lambda y.
	\)
	Since
	\(
		(J_N-I_N)(\one_C+z)
		=
		(N-1)\one_C-z,
	\)
	projecting the first block equation onto
	\(\operatorname{span}\{\one_C\}\) and \(\one_C^\perp\), respectively,
	gives
	\(
		s
		:=
		\lambda-(N-1)
		=
		\frac{1}{N}b^{\mathsf T}y
	\)
	and
	\(
		z
		=
		\frac{1}{\lambda+1}\mathcal P_C Ry.
	\)
	Substituting the latter identity into the second block equation yields
	\begin{equation}
		\left(
		\lambda I_t
		-A_X
		-\frac{1}{\lambda+1}
		R^{\mathsf T}\mathcal P_C R
		\right)y
		=
		b.
		\label{eq:resolvent}
	\end{equation}

	Set
	\(
		S:=R^{\mathsf T}\mathcal P_C R,
		\;
		T:=A_X+(\lambda+1)^{-1}S.
	\)
	Let \(K_0:=\max\{K,1\}\). Replacing \(K\) by \(K_0\) preserves all
	the hypotheses and only enlarges the constants in the uniform
	remainder estimates. We may therefore assume that \(K\ge1\).

	If \(N\ge5K+2\), then the preceding Weyl bound gives
	\(
		\lambda
		\ge
		N-1-K
		\ge
		4K+1.
	\)
	Moreover, since \(\|\mathcal P_C\|=1\),
	\(
		\|S\|
		\le
		\|R\|^2
		\le
		K^2,
	\)
	and hence
	\(
		\|T\|
		\le
		K+\frac{K^2}{4K+2}
		<
		\frac54K.
	\)
	Consequently,
	\(
		\|\lambda^{-1}T\|
		<
		\frac{5K/4}{4K+1}
		<
		\frac12.
	\)
	Thus both the spectral separation and the Neumann expansion below
	hold uniformly whenever \(N\ge5K+2\).

	Equation~\eqref{eq:resolvent} and the Neumann series give
	\(
		y
		=
		\lambda^{-1}b
		+\lambda^{-2}Tb
		+\lambda^{-3}T^2b
		+\mathcal R_3,
	\)
	where
	\(
		\mathcal R_3
		=
		\lambda^{-1}
		\sum_{j=3}^{\infty}(\lambda^{-1}T)^j b.
	\)
	Therefore,
	\(
		\|\mathcal R_3\|
		\le
		2\|b\|\,\lambda^{-4}\|T\|^3
		=
		O_{t,K}(N^{-4})
	\)
	uniformly.

	Next,
	\(
		\lambda^{-2}Tb
		=
		\lambda^{-2}A_Xb
		+
		\lambda^{-2}(\lambda+1)^{-1}Sb,
	\)
	whereas
	\begin{align*}
		\lambda^{-3}T^2b
		={}&
		\lambda^{-3}A_X^2b\\
		&+
		\lambda^{-3}(\lambda+1)^{-1}
		(A_XS+SA_X)b\\
		&+
		\lambda^{-3}(\lambda+1)^{-2}S^2b\\
		={}&
		\lambda^{-3}A_X^2b
		+O_{t,K}(N^{-4}).
	\end{align*}
	It follows that
	\(
		y
		=
		\lambda^{-1}b
		+\lambda^{-2}A_Xb
		+\lambda^{-3}A_X^2b
		+\lambda^{-2}(\lambda+1)^{-1}Sb
		+O_{t,K}(N^{-4}).
	\)
	In particular,
	\(
		\|y\|=O_{t,K}(N^{-1}),
	\)
	and consequently
	\(
		s
		=
		\frac{1}{N}b^{\mathsf T}y
		=
		O_{t,K}(N^{-2}).
	\)
	Thus
	\(
		\lambda=N-1+O_{t,K}(N^{-2}).
	\)

	Writing \(\lambda=N-1+s\) and using
	\(s=O_{t,K}(N^{-2})\), we obtain
	\begin{align*}
		\lambda^{-1}
		&=(N-1)^{-1}+O_{t,K}(N^{-4}),\\
		\lambda^{-2}
		&=(N-1)^{-2}+O_{t,K}(N^{-5}),\\
		\lambda^{-3}
		&=(N-1)^{-3}+O_{t,K}(N^{-6}),\\
		(\lambda+1)^{-1}
		&=N^{-1}+O_{t,K}(N^{-4}).
	\end{align*}

	The \(O_{t,K}(N^{-4})\) remainder in \(y\) contributes
	\(O_{t,K}(N^{-5})\) to
	\(
	s=N^{-1}b^{\mathsf T}y.
	\)
	Since \(b\), \(A_X\), and \(S\) are uniformly bounded, replacing
	\(\lambda^{-j}\) by \((N-1)^{-j}\), for \(1\le j\le3\), and replacing
	\((\lambda+1)^{-1}\) by \(N^{-1}\) introduces an additional error of
	order \(O_{t,K}(N^{-5})\) in \(s\). Hence
	\[
	s
	=
	\frac{b^{\mathsf T}b}{N(N-1)}
	+
	\frac{b^{\mathsf T}A_Xb}{N(N-1)^2}
	+
	\frac{b^{\mathsf T}A_X^2b}{N(N-1)^3}
	+
	\frac{b^{\mathsf T}Sb}{N^2(N-1)^2}
	+
	O_{t,K}(N^{-5}).
	\]
	Recalling that
	\(
		S=R^{\mathsf T}\mathcal P_C R
	\)
	and \(\lambda=N-1+s\), we obtain
	\eqref{eq:fourth-order}. The last two displayed correction terms are
	both \(O_{t,K}(N^{-4})\); absorbing them into the remainder yields
	\eqref{eq:large-clique-expansion}.
\end{proof}
	
	\begin{corollary}
	\label{cor:power-series-form}
	Under the hypotheses of \Cref{lem:large-clique}, define
	\(
		\gamma_2:=b^{\mathsf T}b,
		\;
		\gamma_3:=b^{\mathsf T}A_Xb,
		\;
		\gamma_4:=b^{\mathsf T}A_X^2b,
		\;
		\eta:=b^{\mathsf T}R^{\mathsf T}Rb.
	\)
	For \(v\in C\), set
	\(
		N_X(v):=N_{G_N}(v)\cap X
	\)
	and define the \(b\)-weighted core load at \(v\) by
	\(
		\Lambda(v)
		:=
		\sum_{x\in N_X(v)}b(x).
	\)
	Then
	\(
		(Rb)_v=\Lambda(v)
		\;  (v\in C),
	\)
	and consequently
	\(
		\eta
		=
		\|Rb\|_2^2
		=
		\sum_{v\in C}\Lambda(v)^2.
	\)
	Moreover, uniformly under the hypotheses of
	\Cref{lem:large-clique},
	\begin{equation}
		\label{eq:pure-power-expansion}
		\begin{aligned}
			\rho(G_N)
			={}&
			N-1
			+\frac{\gamma_2}{N^2}
			+\frac{\gamma_2+\gamma_3}{N^3}\\
			&\quad
			+\frac{\gamma_2+2\gamma_3+\gamma_4+\eta}{N^4}
			+O_{t,K}(N^{-5}).
		\end{aligned}
	\end{equation}
	Thus, if two candidates first differ in one of the displayed
	coefficients by a nonzero quantity independent of \(N\), then the
	sign of that first difference determines their spectral ordering for
	all sufficiently large \(N\). In particular, once \(A_X\) and \(b\)
	are fixed, the first placement-sensitive contribution is \(\eta\),
	which appears in the coefficient of \(N^{-4}\).
\end{corollary}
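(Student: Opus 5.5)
The plan is to read everything off the fourth-order expansion \eqref{eq:fourth-order} of \Cref{lem:large-clique}. That takes three steps: identify the placement term combinatorially, expand the rational prefactors in powers of \(N^{-1}\), and check that all the discarded pieces are uniformly \(O_{t,K}(N^{-5})\).

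\textbf{Step 1: the load identity.} For \(v\in C\), the row of \(R\) indexed by \(v\) is the indicator of \(N_X(v)\). Hence
\[
(Rb)_v=\sum_{x\in X}R_{vx}b(x)=\sum_{x\in N_X(v)}b(x)=\Lambda(v),
\]
and therefore \(\eta=b^{\mathsf T}R^{\mathsf T}Rb=\|Rb\|^2=\sum_{v\in C}\Lambda(v)^2\).

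Next I relate \(\|\mathcal P_C Rb\|^2\) to \(\eta\). Since \(\mathcal P_C\) is the orthogonal projection onto \(\one_C^\perp\),
\[
\|\mathcal P_C Rb\|^2=\|Rb\|^2-\frac{(\one_C^{\mathsf T}Rb)^2}{N}.
\]
Using \(\one_C^{\mathsf T}R=b^{\mathsf T}\), this equals \(\eta-\gamma_2^2/N\).

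\textbf{Step 2: expansion in powers of \(N^{-1}\).} I expand the prefactors of \eqref{eq:fourth-order}:
\begin{align*}
\frac{1}{N(N-1)}&=N^{-2}+N^{-3}+N^{-4}+O(N^{-5}),\\
\frac{1}{N(N-1)^2}&=N^{-3}+2N^{-4}+O(N^{-5}),\\
\frac{1}{N(N-1)^3}&=N^{-4}+O(N^{-5}),\\
\frac{1}{N^2(N-1)^2}&=N^{-4}+O(N^{-5}).
\end{align*}
Substituting, together with \(b^{\mathsf T}S b=\eta-\gamma_2^2/N\), produces the coefficients:
\begin{itemize}
\item \(\gamma_2\) at \(N^{-2}\);
\item \(\gamma_2+\gamma_3\) at \(N^{-3}\);
\item \(\gamma_2+2\gamma_3+\gamma_4+\eta\) at \(N^{-4}\).
\end{itemize}
The leftover \(-\gamma_2^2/(N^3(N-1)^2)\) is \(O(N^{-5})\).

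\textbf{Step 3: uniformity and the ordering claim.} This is the only point needing care, though it is routine. All leftover terms are products of \(O(N^{-5})\) with bounded quantities, since
\[
|\gamma_2|\le K^2,\qquad |\gamma_3|\le K^3,\qquad |\gamma_4|\le K^4,\qquad \eta\le\|R\|^2\|b\|^2\le K^4.
\]
They are therefore \(O_{t,K}(N^{-5})\) uniformly, and combined with the uniform remainder of \Cref{lem:large-clique} this gives \eqref{eq:pure-power-expansion}.

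For the ordering claim, subtract the expansions of two candidates. If the first differing coefficient, at \(N^{-j}\) with \(j\le4\), has difference \(c\neq0\) independent of \(N\), then
\[
\rho_1-\rho_2=cN^{-j}+O_{t,K}(N^{-j-1}),
\]
where the higher displayed coefficients are bounded and the remainder is uniform. So the sign of \(\rho_1-\rho_2\) is that of \(c\) once \(N\) exceeds a threshold depending only on \(|c|\), \(t\), and \(K\).

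Finally, if \(A_X\) and \(b\) are fixed, then \(\gamma_2,\gamma_3,\gamma_4\) are fixed. Through order \(N^{-4}\), the only quantity that still depends on \(R\) is \(\eta\), which gives the last assertion.
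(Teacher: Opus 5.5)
Your proposal is correct and follows the paper's proof essentially step for step. It uses the same four ingredients: the load identity \((Rb)_v=\Lambda(v)\), the reduction \(b^{\mathsf T}R^{\mathsf T}\mathcal P_C Rb=\eta-\gamma_2^2/N\) via \(R^{\mathsf T}\one_C=b\), the same four prefactor expansions substituted into \eqref{eq:fourth-order}, and the uniform-remainder argument for the ordering claim. Your explicit bounds \(\gamma_2\le K^2\), \(|\gamma_3|\le K^3\), and \(\gamma_4,\eta\le K^4\) simply make concrete the paper's statement that these quantities are uniformly bounded.
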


\begin{proof}
	For every \(v\in C\), the definition of \(R\) gives
	\(
		(Rb)_v
		=
		\sum_{x\in X}R_{vx}b(x)
		=
		\sum_{x\in N_X(v)}b(x)
		=
		\Lambda(v).
	\)
	Therefore,
	\(
		\eta
		=
		b^{\mathsf T}R^{\mathsf T}Rb
		=
		\|Rb\|_2^2
		=
		\sum_{v\in C}\Lambda(v)^2.
	\)

	Since
	\(
		\mathcal P_C
		=
		I_N-\frac{1}{N}\one_C\one_C^{\mathsf T}
	\)
	and \(R^{\mathsf T}\one_C=b\), we have
	\begin{align*}
		b^{\mathsf T}R^{\mathsf T}\mathcal P_C Rb
		&=
		b^{\mathsf T}R^{\mathsf T}Rb
		-\frac{1}{N}
		b^{\mathsf T}R^{\mathsf T}\one_C
		\one_C^{\mathsf T}Rb\\
		&=
		\eta-\frac{(b^{\mathsf T}b)^2}{N}\\
		&=
		\eta-\frac{\gamma_2^2}{N}.
	\end{align*}

	The hypotheses of \Cref{lem:large-clique} imply that
	\(\gamma_2,\gamma_3,\gamma_4\), and \(\eta\) are uniformly bounded.
	Furthermore,
	\begin{align*}
		\frac{1}{N(N-1)}
		&=
		\frac{1}{N^2}
		+\frac{1}{N^3}
		+\frac{1}{N^4}
		+O(N^{-5}),\\
		\frac{1}{N(N-1)^2}
		&=
		\frac{1}{N^3}
		+\frac{2}{N^4}
		+O(N^{-5}),\\
		\frac{1}{N(N-1)^3}
		&=
		\frac{1}{N^4}
		+O(N^{-5}),\\
		\frac{1}{N^2(N-1)^2}
		&=
		\frac{1}{N^4}
		+O(N^{-5}).
	\end{align*}
	Consequently,
	\[
		\frac{
			b^{\mathsf T}R^{\mathsf T}\mathcal P_C Rb
		}{
			N^2(N-1)^2
		}
		=
		\frac{\eta}{N^4}
		+O_{t,K}(N^{-5}).
	\]
	Substituting these expansions into
	\eqref{eq:fourth-order} and collecting equal powers of \(N^{-1}\)
	gives \eqref{eq:pure-power-expansion}.

	The final comparison assertion follows from the uniform
	\(O_{t,K}(N^{-5})\) remainder: the first nonzero difference among the
	coefficients of \(N^{-2}\), \(N^{-3}\), and \(N^{-4}\) dominates all
	subsequent terms for sufficiently large \(N\).
\end{proof}

	\begin{corollary}[Uniform comparison by the first nonzero coefficient]
		\label{cor:uniform-lexicographic-comparison}
		Let \((G_N)\) and \((H_N)\) be two sequences of graphs satisfying the
		hypotheses of \Cref{lem:large-clique} uniformly in \(N\). Suppose that,
		for some integer \(j\ge 1\),
		\(
		\rho(G_N)-\rho(H_N)
		=
		\Delta_N N^{-j}+O(N^{-j-1})
		\)
		uniformly in \(N\), where either
		\(\Delta_N\ge\varepsilon>0\) for all sufficiently large \(N\), or
		\(\Delta_N\le-\varepsilon<0\) for all sufficiently large \(N\).
		Then, for all sufficiently large \(N\), the sign of
		\(\rho(G_N)-\rho(H_N)\) agrees with the sign of \(\Delta_N\).
	\end{corollary}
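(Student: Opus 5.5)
The statement reduces to an elementary limiting argument, so the plan is to isolate the leading difference, show that the remainder is genuinely of lower order, and read off the sign. By hypothesis there are constants \(C>0\) and \(N_1\) such that, for all \(N\ge N_1\),
\[
	\bigl|\rho(G_N)-\rho(H_N)-\Delta_N N^{-j}\bigr|\le C N^{-j-1}.
\]
The uniformity is essential here. It comes from \Cref{lem:large-clique} and \Cref{cor:power-series-form}, where the constants \(C_r(t,K)\) and \(N_r(t,K)\) do not depend on the particular admissible pair. I will therefore assume that the remainder is controlled by a single constant \(C\), rather than by one that varies with \(N\).

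Next, I treat the case \(\Delta_N\ge\varepsilon\) for all \(N\ge N_2\). Put \(N_3:=\max\{N_1,N_2,\lceil 2C/\varepsilon\rceil+1\}\). For \(N\ge N_3\) we have \(CN^{-j-1}\le \tfrac{\varepsilon}{2}N^{-j}\), and hence
\[
	\rho(G_N)-\rho(H_N)\ge \Delta_N N^{-j}-CN^{-j-1}\ge \tfrac{\varepsilon}{2}N^{-j}>0.
\]
The case \(\Delta_N\le-\varepsilon\) is symmetric. Alternatively, it follows by exchanging the roles of \(G_N\) and \(H_N\), which replaces \(\Delta_N\) by \(-\Delta_N\). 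In that case the difference is at most \(-\tfrac{\varepsilon}{2}N^{-j}<0\).

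The only delicate point is that \(\Delta_N\) may vary with \(N\). For this reason the lower bound \(\varepsilon\) must be uniform, and not merely \(\Delta_N>0\) for each \(N\); the hypothesis provides exactly this. In the intended applications, \(\Delta_N\) is an integer-valued difference of the coefficients \(\gamma_2,\gamma_3,\gamma_4,\eta\), so any nonzero value automatically has absolute value at least \(1\). The same argument also shows that a single threshold works simultaneously for all of the finitely many comparison pairs given by \Cref{rem:finite-patterns}. One takes the maximum of the finitely many \(N_3\)'s, which is how \(n_{\mathrm{int}}(k,\delta)\) will eventually be defined.
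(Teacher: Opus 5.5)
Your proof is correct and is essentially the paper's argument. Both fix a uniform constant \(C\) for the \(O(N^{-j-1})\) remainder, observe that \(CN^{-j-1}<\tfrac{\varepsilon}{2}N^{-j}\le\tfrac12|\Delta_N|N^{-j}\) once \(N>2C/\varepsilon\), and conclude that the leading term \(\Delta_NN^{-j}\) determines the sign. Your closing remarks are not needed for this corollary, since only the uniform gap \(\varepsilon\) enters. They do, however, match how the paper later takes a maximum over the finitely many comparisons to define \(n_{\mathrm{int}}(k,\delta)\).
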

	
	\begin{proof}
		By uniformity, there exists a constant \(C>0\), independent of \(N\),
		such that
		\[
		\left|
		\rho(G_N)-\rho(H_N)-\Delta_NN^{-j}
		\right|
		\le
		CN^{-j-1}
		\]
		for all sufficiently large \(N\). Since
		\(
		|\Delta_N|N^{-j}\ge \varepsilon N^{-j},
		\)
		the remainder has absolute value less than half the absolute value of
		the leading term whenever
		\(
		N>\frac{2C}{\varepsilon}.
		\)
		Hence the leading term determines the sign of
		\(\rho(G_N)-\rho(H_N)\).
	\end{proof}
	
	\subsection{Maximum spectral radius}
\label{subsec:missing-edge-selection}

The proof of the maximum result proceeds in three stages. First,
\Cref{prop:max-square} determines the missing-edge graphs that maximize
the degree-square sum. Second, \Cref{clm:h3-tie} resolves the only tie,
between the star and triangle configurations, which occurs when \(h=3\).
Third, once the star has been identified as the missing-edge graph,
\Cref{thm:nested} shows that the weighted loads on the \(Y\)-side are
uniquely maximized by a nested placement.

\begin{lemma}[The boundary cases \(h=0\) and \(h=1\)]
	\label{lem:boundary-h}
	If \(h=0\), then
	\(\mathcal{L}_{\mathcal{H}}^{2}(n,k,\delta)\) consists of a single
	isomorphism class.
	If \(h=1\), then, for all sufficiently large \(n\), every graph
	attaining the maximum spectral radius in
	\(\mathcal{L}_{\mathcal{H}}^{2}(n,k,\delta)\) has the following
	structure: if \(uv\) is the unique edge of the missing-edge graph
	\(M\), then
	\(
		N_Y(u)=N_Y(v)=\{y\}
	\)
	for some \(y\in Y\).
\end{lemma}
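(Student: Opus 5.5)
For \(h=0\) the plan is a direct appeal to \Cref{prop:LH2-reparametrization}. Then \(e(M)=0\) and \(e(B)=0\), so \(M\) is the edgeless graph on \(X\) (\(|X|=2\)) and \(B\) has no edges. Hence \(G_h(M,B)\) is determined by the labeled partition alone: \(X\) induces \(K_2\), \(W\cup Y\) is a clique, \(W\) is complete to \(X\), and there are no \(X\)--\(Y\) edges. Any two members of the family are therefore related by a bijection of the vertex partitions, so they are isomorphic.

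For \(h=1\), the proposition gives \(e(M)=1\), say \(E(M)=\{uv\}\). It also gives \(d_B(u)=d_B(v)=1\) and \(d_B(x)=0\) for every other \(x\in X\). So \(B\) consists of two edges \(uy\) and \(vy'\) with \(y,y'\in Y\). Up to relabeling there are exactly two admissible pairs: the \emph{concentrated} one with \(y=y'\), and the \(\emph{dispersed}\) one with \(y\neq y'\). The vertex-count condition \(|Y|\ge2h+2=4\) from \Cref{rem:finite-patterns} guarantees that both are realizable.

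I will compare these two pairs using \Cref{cor:power-series-form} with \(K=K_{h,k}\) from \eqref{eq:explicit-uniform-K}. Both candidates share the same \(M\), so they share \(A_X=A_M\) and \(b=b_M\). Consequently the coefficients \(\gamma_2,\gamma_3,\gamma_4\) coincide, and the difference \(\rho(\text{concentrated})-\rho(\text{dispersed})\) equals \((\eta_{\mathrm{conc}}-\eta_{\mathrm{disp}})N^{-4}+O_{t,K}(N^{-5})\).

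To compute \(\eta=\sum_{v\in C}\Lambda(v)^2\), note that \(b(u)=b(v)=a+1\) and every other entry of \(b\) equals \(a\), where \(a=k-2\). Split the vertices of \(C\) into three kinds:
\begin{itemize}
\item every \(w\in W\) is adjacent to all of \(X\), so its load \(\Lambda(w)=\sum_x b(x)\) is the same in both configurations;
\item every inactive \(y\in Y_0\) has load \(0\);
\item the active vertices are the only ones whose loads differ.
\end{itemize}
In the concentrated case the single active vertex has load \(2(a+1)\), contributing \(4(a+1)^2\). In the dispersed case the two active vertices each have load \(a+1\), contributing \(2(a+1)^2\). Hence
\[
\eta_{\mathrm{conc}}-\eta_{\mathrm{disp}}=2(a+1)^2=2(k-1)^2>0.
\]
This difference is a positive constant independent of \(N\). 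By \Cref{cor:uniform-lexicographic-comparison}, the concentrated configuration has strictly larger spectral radius for all sufficiently large \(N=n-2h-2\). So every maximizer satisfies \(N_Y(u)=N_Y(v)=\{y\}\).

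The argument is a routine bookkeeping of the coefficients, and the main step needing care is confirming that \(\gamma_2,\gamma_3,\gamma_4\) genuinely agree. They depend only on \(A_M\) and \(b_M\), so they agree here. Equally important is that the \(W\)-loads cancel exactly, which holds because \(W\) is complete to \(X\) in every member of the family. Uniformity of the remainder is guaranteed by the constant \(K_{h,k}\) being independent of \((M,B)\).
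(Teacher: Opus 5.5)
Your proposal is correct and follows essentially the same route as the paper. The case $h=0$ is immediate because $M$ and $B$ are empty. For $h=1$ you reduce to the common- versus distinct-endpoint placements, note that $A_M$, $b_M$ and the $W$-loads coincide, and compare the $Y$-side squared loads $4(k-1)^2$ versus $2(k-1)^2$ at order $N^{-4}$ via \Cref{cor:uniform-lexicographic-comparison}. The only cosmetic difference is that you read this off the unprojected coefficient $\eta$ of \Cref{cor:power-series-form}, while the paper uses the projected term $b^{\mathsf T}R^{\mathsf T}\mathcal P_C Rb$ of \eqref{eq:fourth-order}; these are equivalent because the projection correction does not depend on the placement.
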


\begin{proof}
	If \(h=0\), then
	\(
	E(M)=E(B)=\varnothing.
	\)
	Consequently, the prescribed sizes of \(X\), \(W\), and \(Y\),
	together with the defining adjacencies of the construction, determine
	a unique graph up to isomorphism.
	
	Now suppose that \(h=1\), and let \(uv\) be the unique edge of \(M\).
	By \Cref{prop:LH2-reparametrization},
	\(
		d_B(u)=d_B(v)=1,
		\;
		d_B(x)=0
		\; \text{for every }x\in X\setminus\{u,v\}.
	\)
	Thus, up to relabeling the vertices of \(Y\), there are only two
	possible placements: the two \(X\)--\(Y\) edges either have a common
	endpoint in \(Y\) or have distinct endpoints in \(Y\).
	
	The matrix \(A_M\) and the vector \(b_M\) are identical in the two
	placements. Hence, by \eqref{eq:fourth-order}, the first coefficient
	that can distinguish them is
	\(
		b_M^{\mathsf T}R^{\mathsf T}\mathcal P_C Rb_M.
	\)
	For either placement, define
	\[
		\Lambda_R(z)
		=
		(Rb_M)_z
		=
		\sum_{x\in N_X(z)}b_M(x),
		\qquad z\in C.
	\]
	Since
	\[
		\mathcal P_C
		=
		I_N-\frac{1}{N}\one_C\one_C^{\mathsf T}
		\qquad\text{and}\qquad
		R^{\mathsf T}\one_C=b_M,
	\]
	we have
	\[
		b_M^{\mathsf T}R^{\mathsf T}\mathcal P_C Rb_M
		=
		\sum_{z\in C}\Lambda_R(z)^2
		-
		\frac{1}{N}
		\bigl(b_M^{\mathsf T}b_M\bigr)^2.
	\]
	The correction term is fixed because \(b_M\) is the same in both
	placements. Moreover, every vertex of \(W\) is adjacent to all
	vertices of \(X\), so its weighted load is
	\(
		\sum_{x\in X}b_M(x),
	\)
	which is also independent of the placement.
	
	Put
	\(
		\beta=b_M(u)=b_M(v)=k-1.
	\)
	In the distinct-endpoint placement, the two nonzero \(Y\)-side loads
	are \(\beta\) and \(\beta\), and hence their contribution to the
	squared-load sum is
	\(
		\beta^2+\beta^2=2\beta^2.
	\)
	In the common-endpoint placement, the unique nonzero \(Y\)-side load
	is \(2\beta\), and its contribution is
	\(
		(2\beta)^2=4\beta^2.
	\)
	Therefore,
	\(
		b_M^{\mathsf T}R_{\mathrm{com}}^{\mathsf T}
		\mathcal P_C R_{\mathrm{com}}b_M
		-
		b_M^{\mathsf T}R_{\mathrm{dist}}^{\mathsf T}
		\mathcal P_C R_{\mathrm{dist}}b_M
		=
		2\beta^2.
	\)
	All preceding coefficients in \eqref{eq:fourth-order} agree for the
	two placements. Subtracting their uniform expansions therefore gives
	\[
		\rho(G_{\mathrm{com}})
		-
		\rho(G_{\mathrm{dist}})
		=
		\frac{2\beta^2}{N^2(N-1)^2}
		+
		O_k(N^{-5}).
	\]
	Since \(\beta=k-1>0\), the leading term is positive and has order
	\(N^{-4}\). Hence
	\Cref{cor:uniform-lexicographic-comparison} implies that, for all
	sufficiently large \(n\), the common-endpoint placement has strictly
	larger spectral radius. As these are the only two placement types, it
	is the unique maximizing placement up to isomorphism.
\end{proof}
	
	\begin{proposition}
	\label{prop:max-square}
	Let \(h\ge 1\), and let \(M\) be a simple graph with exactly \(h\)
	edges. Then
	\(
		\sum_{x\in V(M)} d_M(x)^2
		\le
		h(h+1).
	\)
	Equality holds if and only if the edges of \(M\) are pairwise
	intersecting; equivalently, every pair of distinct edges of \(M\)
	has a common endpoint. Consequently, if equality holds, then the
	graph obtained from \(M\) by deleting all isolated vertices is
	isomorphic to \(K_{1,h}\), except that \(K_3\) is also possible when
	\(h=3\).
\end{proposition}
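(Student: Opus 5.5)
The plan is to rewrite the degree-square sum as a count over pairs of edges. For each vertex \(x\), \(d_M(x)^2=d_M(x)+2\binom{d_M(x)}{2}\). Summing over \(x\), the handshaking lemma gives \(\sum_x d_M(x)=2h\). The quantity \(\sum_x\binom{d_M(x)}{2}\) counts unordered pairs of distinct edges sharing a common endpoint. In a simple graph two distinct edges share at most one endpoint, so each such pair is counted exactly once. Hence
\[
	\sum_{x\in V(M)}d_M(x)^2=2h+2p(M),
\]
where \(p(M)\) is the number of unordered pairs of distinct edges of \(M\) that intersect. Since \(p(M)\le\binom{h}{2}\), we obtain \(\sum_x d_M(x)^2\le 2h+h(h-1)=h(h+1)\). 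Equality holds exactly when \(p(M)=\binom h2\), that is, when every two distinct edges share an endpoint. This settles the inequality and the first equality characterization.

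For the structural consequence, I will classify pairwise intersecting edge families. These are the intersecting families of \(2\)-subsets of the vertex set. If \(h\le 2\), the nonisolated part is \(K_2=K_{1,1}\) or a path with two edges, which is \(K_{1,2}\). If \(h\ge3\), take two edges \(xy\) and \(xz\); they must meet, and in a simple graph they meet in exactly one vertex, say \(x\). Any further edge \(e\) must meet both of them. Either \(e\) contains \(x\), or \(e=yz\).

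If some edge avoids \(x\), it must be \(yz\), and then every edge meets each of \(xy\), \(yz\), \(xz\). An edge meeting all three sides of the triangle within \(\{x,y,z\}\) must be one of these three edges. Any edge with an endpoint outside \(\{x,y,z\}\) has only one endpoint in the triangle and so misses some side. Hence \(M\) without isolated vertices is \(K_3\), and \(h=3\). Otherwise every edge contains \(x\), and the nonisolated part is \(K_{1,h}\).

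No step here is a real obstacle. The only care needed is the simplicity of \(M\): it ensures distinct edges share at most one vertex, so the double-counting identity is exact. It also rules out the triangle case for \(h\ne3\). Conversely, both \(K_{1,h}\) and, for \(h=3\), \(K_3\) are pairwise intersecting, so they attain equality. A direct check confirms this: \(h^2+h\cdot1=h(h+1)\) for the star and \(3\cdot 4=12\) for the triangle.
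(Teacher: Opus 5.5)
Your proposal is correct and follows essentially the same route as the paper. Both use the identity \(\sum_{x}d_M(x)^2=2h+2p(M)\) with \(p(M)\le\binom{h}{2}\), and the same classification of pairwise intersecting edge sets; the only difference is that the paper splits on whether \(M\) contains a triangle, whereas you split on whether some edge avoids the shared vertex \(x\), which is the same case distinction.
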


\begin{proof}
	Let \(p(M)\) denote the number of unordered pairs of distinct edges
	of \(M\) having a common endpoint. Since \(M\) is simple, every pair
	of distinct edges has at most one common endpoint. Hence
	\(
		p(M)
		=
		\sum_{x\in V(M)}\binom{d_M(x)}{2}.
	\)
	Using
	\(
		d_M(x)^2
		=
		d_M(x)+2\binom{d_M(x)}{2}
	\)
	and the degree-sum formula, we obtain
	\begin{align*}
		\sum_{x\in V(M)}d_M(x)^2
		&=
		\sum_{x\in V(M)}d_M(x)
		+
		2\sum_{x\in V(M)}\binom{d_M(x)}{2} \\
		&=
		2h+2p(M) \\
		&\le
		2h+2\binom{h}{2} \\
		&=
		h(h+1).
	\end{align*}
	Equality holds if and only if
	\(
	p(M)=\binom{h}{2},
	\)
	which is equivalent to every pair of distinct edges of \(M\)
	having a common endpoint.

	Assume henceforth that equality holds. If \(h=1\), then \(M\)
	consists of a single edge together with isolated vertices, and the
	conclusion follows immediately. Thus, assume that \(h\ge2\).

	Suppose first that \(M\) contains a triangle. Any edge having a
	common endpoint with each of the three edges of this triangle must
	itself be one of the three triangle edges. Since the edges of \(M\)
	are pairwise intersecting, \(M\) therefore consists of this triangle
	together with isolated vertices. In particular, \(h=3\).

	Now suppose that \(M\) is triangle-free. Choose two distinct edges
	\(uv\) and \(uw\) of \(M\), where \(v\ne w\). Every other edge must
	have a common endpoint with both \(uv\) and \(uw\). If such an edge
	does not contain \(u\), then it must be \(vw\), producing the
	triangle \(uvw\), a contradiction. Therefore every edge of \(M\)
	contains \(u\). Since \(M\) is simple and has \(h\) edges, deleting
	its isolated vertices yields \(K_{1,h}\).
\end{proof}
	
	\begin{claim}[Resolving the \(h=3\) tie]
	\label{clm:h3-tie}
	Fix \(k\ge4\), and set \(h=3\). Let
	\(
		M_{\star}\cong K_{1,3}\cup4K_1,
		\;
		M_{\triangle}\cong K_3\cup5K_1.
	\)
	Then there exists \(n_0=n_0(k)\) such that, for every
	\(n\ge n_0\) and every choice of admissible placements
	\(B_{\star}\) and \(B_{\triangle}\),
	\(
		\rho\bigl(G_3(M_{\star},B_{\star})\bigr)
		>
		\rho\bigl(G_3(M_{\triangle},B_{\triangle})\bigr).
	\)
\end{claim}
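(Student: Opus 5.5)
We compare the two candidates through the uniform expansion \eqref{eq:pure-power-expansion} of \Cref{cor:power-series-form}, applied with \(A_X=A_M\) and \(b=b_M\). By \Cref{prop:max-square}, both \(M_\star\) and \(M_\triangle\) have degree-square sum \(h(h+1)=12\). With \(a=k-4\), we have \(b_M=a\one_X+d_M\), so
\[
	\gamma_2=\|b_M\|^2=a^2t+4ah+\sum_x d_M(x)^2 .
\]
This shows that \(\gamma_2\) coincides for the two candidates. The \(N^{-3}\) coefficient \(\gamma_2+\gamma_3\) is therefore the first place where they can differ. We compute \(\gamma_3=b_M^{\mathsf T}A_Mb_M\) with \(A_M=J_t-I_t-A(M)\):
\[
	\gamma_3=(\one^{\mathsf T}b_M)^2-\|b_M\|^2-b_M^{\mathsf T}A(M)b_M .
\]
The sum \(\one^{\mathsf T}b_M=at+2h\) and \(\|b_M\|^2\) agree for both candidates. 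The comparison thus reduces to \(b_M^{\mathsf T}A(M)b_M=2\sum_{xy\in E(M)}b(x)b(y)\).

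We now evaluate this edge sum. For the star with center weight \(a+3\) and leaf weights \(a+1\), it equals \(2\cdot3(a+3)(a+1)=6(a^2+4a+3)\). For the triangle, all three vertices carry weight \(a+2\), giving \(2\cdot3(a+2)^2=6(a^2+4a+4)\). Hence
\[
	\gamma_3(M_\star)-\gamma_3(M_\triangle)=6>0,
\]
independently of \(k\). Since \(\gamma_3\) depends only on \((M,b_M)\) and not on \(B\), this difference holds for every admissible pair of placements \(B_\star,B_\triangle\).

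Next we conclude. The hypotheses of \Cref{lem:large-clique} hold uniformly with the constant \(K_{h,k}\) of \eqref{eq:explicit-uniform-K}, and \(t=8\) is fixed. The difference \(\rho(G_3(M_\star,B_\star))-\rho(G_3(M_\triangle,B_\triangle))\) therefore equals \(6N^{-3}+O_{t,K}(N^{-4})\), with a remainder bound that is uniform over all placements. Remark~\ref{rem:finite-patterns} says there are only finitely many placement patterns, though uniformity alone already suffices. \Cref{cor:uniform-lexicographic-comparison} with \(j=3\) and \(\varepsilon=6\) then yields a single \(n_0(k)\) beyond which the strict inequality holds for every choice of placements.

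The main obstacle is this uniformity. It is not enough to compare individual pairs; the threshold must not depend on \(B_\star\) or \(B_\triangle\). This is exactly what the \(O_{t,K}\) convention provides, because \(K_{h,k}\) does not depend on \(M\), \(B\), or \(n\). We should also check the edge case \(a=0\) (that is, \(k=4\)), where the computation is unchanged and still gives \(6\).
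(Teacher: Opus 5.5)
Your proposal is correct and follows essentially the same route as the paper. Both arguments show that \(\gamma_2=b_M^{\mathsf T}b_M\) coincides because the degree sums \(6\) and degree-square sums \(12\) agree, reduce \(b_M^{\mathsf T}A_Mb_M\) to the edge sums \(3(a+3)(a+1)\) versus \(3(a+2)^2\) to get a placement-independent gap of \(6\), and then conclude using the uniform \(O(N^{-4})\) remainder. The only cosmetic difference is that you read off the leading term \(6N^{-3}\) from \eqref{eq:pure-power-expansion}, whereas the paper uses \(6/(N(N-1)^2)\) directly from \eqref{eq:large-clique-expansion}.
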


\begin{proof}
	Put
	\(
		N=n-8,
		\;
		a=k-4,
		\;
		A_M=A(K_X-M),
		\;
		b_M=a\one_X+d_M,
	\)
	where \(d_M=(d_M(x))_{x\in X}\). The degree sequences of
	\(M_{\star}\) and \(M_{\triangle}\) are
	\(
		(3,1,1,1,0,0,0,0)
		\; \text{and}\;
		(2,2,2,0,0,0,0,0),
	\)
	respectively. Thus, for both graphs,
	\(
		\sum_{x\in X}d_M(x)=6
		\; \text{and}\;
		\sum_{x\in X}d_M(x)^2=12.
	\)
	Since
	\[
		b_M^{\mathsf T}b_M
		=
		8a^2
		+
		2a\sum_{x\in X}d_M(x)
		+
		\sum_{x\in X}d_M(x)^2,
	\]
	we have
	\(
		b_{M_{\star}}^{\mathsf T}b_{M_{\star}}
		=
		b_{M_{\triangle}}^{\mathsf T}b_{M_{\triangle}}.
	\)
	Moreover,
	\(
		\sum_{x\in X}b_{M_{\star}}(x)
		=
		\sum_{x\in X}b_{M_{\triangle}}(x)
		=
		8a+6.
	\)

	For any missing-edge graph \(M\) on \(X\), we have
	\begin{align*}
		b_M^{\mathsf T}A_Mb_M
		&=
		b_M^{\mathsf T}
		\bigl(J_8-I_8-A(M)\bigr)b_M \\
		&=
		\left(\sum_{x\in X}b_M(x)\right)^2
		-
		\sum_{x\in X}b_M(x)^2
		-
		2\sum_{xy\in E(M)}b_M(x)b_M(y).
	\end{align*}
	For \(M_{\star}\), each edge joins the vertex whose
	\(b_{M_{\star}}\)-coordinate is \(a+3\) to a vertex whose
	coordinate is \(a+1\). Hence
	\[
		\sum_{xy\in E(M_{\star})}
		b_{M_{\star}}(x)b_{M_{\star}}(y)
		=
		3(a+3)(a+1).
	\]
	For \(M_{\triangle}\), each edge joins two vertices whose
	\(b_{M_{\triangle}}\)-coordinates are both \(a+2\). Therefore,
	\(
		\sum_{xy\in E(M_{\triangle})}
		b_{M_{\triangle}}(x)b_{M_{\triangle}}(y)
		=
		3(a+2)^2.
	\)
	The difference between these two quantities is
	\(
		3(a+2)^2-3(a+3)(a+1)=3.
	\)
	Since the remaining two terms in the expression for
	\(b_M^{\mathsf T}A_Mb_M\) are equal for the two candidates, it
	follows that
	\begin{align*}
		&b_{M_{\star}}^{\mathsf T}
		A_{M_{\star}}b_{M_{\star}}
		-
		b_{M_{\triangle}}^{\mathsf T}
		A_{M_{\triangle}}b_{M_{\triangle}} \\
		&\qquad=
		2\left(
		3(a+2)^2-3(a+3)(a+1)
		\right)
		=
		6.
	\end{align*}

	By \eqref{eq:large-clique-expansion},
	\[
		\rho\bigl(G_3(M,B)\bigr)
		=
		N-1
		+
		\frac{b_M^{\mathsf T}b_M}{N(N-1)}
		+
		\frac{b_M^{\mathsf T}A_Mb_M}{N(N-1)^2}
		+
		O_k(N^{-4}),
	\]
	where the remainder is uniform over all admissible placements \(B\).
	Consequently,
	\[
		\rho\bigl(G_3(M_{\star},B_{\star})\bigr)
		-
		\rho\bigl(G_3(M_{\triangle},B_{\triangle})\bigr)
		=
		\frac{6}{N(N-1)^2}
		+
		O_k(N^{-4}),
	\]
	uniformly over \(B_{\star}\) and \(B_{\triangle}\).

	The leading term is positive and has order \(N^{-3}\), whereas the
	uniform remainder has order \(N^{-4}\). Hence the difference is
	positive for all sufficiently large \(N\), with a threshold
	depending only on \(k\). Since \(N=n-8\), the result follows.
\end{proof}
	
	\begin{theorem}
	\label{thm:star-core}
	Fix integers \(h\ge2\) and \(k\ge h+1\), and set
	\(\delta=k+h\). For every \(n\ge n_{\mathrm{int}}(k,\delta)\),
	the missing-edge graph of every spectral-radius maximizer in
	\(\mathcal{L}_{\mathcal H}^{2}(n,k,\delta)\) is isomorphic to
	\(
		M^\star
		:=
		K_{1,h}\cup(h+1)K_1.
	\)
\end{theorem}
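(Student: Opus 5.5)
The plan is to compare candidates through the leading candidate-dependent coefficient of \Cref{lem:large-clique}, uniformly over all placements. By \Cref{prop:LH2-reparametrization}, every graph in the family is \(G_h(M,B)\) with \(e(M)=h\). Here \(C=W\cup Y\) has order \(N=n-2h-2\), \(A_X=A_M\), and \(b=b_M=a\one_X+d_M\) with \(a=k-h-1\). The hypotheses of \Cref{lem:large-clique} hold uniformly with the constant \(K_{h,k}\) of \eqref{eq:explicit-uniform-K}, and by \Cref{rem:finite-patterns} only finitely many patterns \((M,B)\) have to be compared. We have
\[
b_M^{\mathsf T}b_M=(2h+2)a^2+4ah+\sum_{x\in X}d_M(x)^2 .
\]
The first two terms are the same for every admissible \(M\). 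Hence, by \eqref{eq:pure-power-expansion}, the coefficient of \(N^{-2}\) differs between candidates only through \(\sum_x d_M(x)^2\).

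\textbf{Step 1: reduce to the maximal degree-square sum.} By \Cref{prop:max-square}, \(\sum_x d_M(x)^2\le h(h+1)\), with equality exactly for pairwise intersecting edge sets. Any \(M\) not attaining equality is an integer graph, so its degree-square sum is at most \(h(h+1)-2\). Compare such a \(G_h(M,B)\) with a fixed admissible star candidate \(G_h(M^\star,B^\star)\); one such \(B^\star\) is the placement attaching each vertex \(x\) to \(d_M(x)\) vertices of \(Y\). The difference of spectral radii is then at most \(-2/N^2+O_{h,k}(N^{-3})\), uniformly in \(B\). By \Cref{cor:uniform-lexicographic-comparison}, together with the finiteness of the patterns, the non-equality graph is strictly beaten once \(n\) exceeds a threshold depending only on \((k,\delta)\). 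So every maximizer has \(M\) pairwise intersecting.

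\textbf{Step 2: identify the pairwise intersecting \(M\).} By \Cref{prop:max-square}, \(M\) is \(K_{1,h}\) plus isolated vertices, or \(h=3\) and \(M\cong K_3\cup5K_1\). Since \(|X|=2h+2\), the star case gives exactly \(M\cong K_{1,h}\cup(h+1)K_1\). In the exceptional case \(h=3\) we have \(k\ge h+1=4\), so \Cref{clm:h3-tie} applies and shows that every triangle candidate is strictly beaten by every star candidate for \(n\ge n_0(k)\). A triangle graph therefore cannot be a maximizer.

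\textbf{Step 3: fix \(n_{\mathrm{int}}\).} Take \(n_{\mathrm{int}}(k,\delta)\) at least the maximum of the finitely many thresholds produced in Steps 1 and 2, and at least the \(N\ge5K+2\) requirement of \Cref{lem:large-clique}. This choice only needs to be made consistently with \Cref{subsec:common-threshold}.

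The main obstacle is the uniformity needed in Step 1: the comparison must hold simultaneously against every placement \(B\), not one fixed \(B\). This is exactly what the uniform \(O_{t,K}\) remainder of \Cref{lem:large-clique} provides, because \(K_{h,k}\) does not depend on \(M\), \(B\), or \(n\). A second point is that the gap of at least \(2\) in the \(N^{-2}\) coefficient is an integer gap bounded away from zero, so the \(\varepsilon\) in \Cref{cor:uniform-lexicographic-comparison} is uniform.
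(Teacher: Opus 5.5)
Your proposal is correct and follows the paper's proof essentially step for step. You isolate \(\sum_{x}d_M(x)^2\) as the only candidate-dependent part of the \(N^{-2}\) coefficient, beat every non-extremal core against a fixed star candidate uniformly over placements via \Cref{prop:max-square} and \Cref{cor:uniform-lexicographic-comparison}, and eliminate the \(h=3\) triangle core with \Cref{clm:h3-tie}.

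Two cosmetic points. First, integrality alone only gives a deficit of at least \(1\), which already suffices. The deficit of at least \(2\) that you state comes from the paper's parity observation \(\sum_{x}d_M(x)^2\equiv\sum_{x}d_M(x)=2h\equiv0\pmod 2\), together with \(h(h+1)\) being even. Second, the star placement \(B^\star\) should attach each \(x\) to \(d_{M^\star}(x)\) vertices of \(Y\), not \(d_M(x)\).
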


\begin{proof}
	Since \(k\ge h+1\), we have
	\(
	k\le\delta=k+h\le2k-1,
	\)
	so \((k,\delta)\) lies in the admissible parameter range.

	Let
	\(
		G=G_h(M,B)
		\in
		\mathcal{L}_{\mathcal H}^{2}(n,k,\delta),
	\)
	and put
	\(
		a=k-h-1,
		\;
		t=|X|=2h+2,
		\;
		N=n-t=n-2h-2.
	\)
	By \eqref{eq:bM},
	\(
	b_M=a\one_X+d_M,
	\)
	where \(d_M=(d_M(x))_{x\in X}\). Since \(e(M)=h\), the
	degree-sum formula gives
	\(
	\sum_{x\in X}d_M(x)=2h.
	\)
	Therefore,
	\begin{align}
		b_M^{\mathsf T}b_M
		&=
		\sum_{x\in X}\bigl(a+d_M(x)\bigr)^2 \notag\\
		&=
		ta^2+4ah+\sum_{x\in X}d_M(x)^2.
		\label{eq:bM-square-degree}
	\end{align}
	Thus, among the admissible missing-edge graphs \(M\), the first
	\(M\)-dependent term in \eqref{eq:large-clique-expansion} is
	maximized precisely when the sum of squared degrees
	\(
	\sum_{x\in X}d_M(x)^2
	\)
	is maximized.

	By \Cref{prop:max-square},
	\(
		\sum_{x\in X}d_M(x)^2
		\le
		h(h+1).
	\)
	When \(h=2\) or \(h\ge4\), equality holds only when the
	nonisolated part of \(M\) is isomorphic to \(K_{1,h}\). Since
	\(|X|=2h+2\), this is equivalent to
	\(
		M\cong K_{1,h}\cup(h+1)K_1=M^\star.
	\)

	Moreover, since \(r^2\equiv r\pmod 2\) for every integer \(r\),
	\(
		\sum_{x\in X}d_M(x)^2
		\equiv
		\sum_{x\in X}d_M(x)
		=
		2h
		\equiv0
		\pmod 2.
	\)
	The quantity \(h(h+1)\) is also even. Hence every strict deficit
	from \(h(h+1)\) is at least \(2\).

	Fix an admissible placement \(B^\star\) of \(M^\star\), and let
	\(
		G^\star=G_h(M^\star,B^\star).
	\)
	If \(M\) does not attain equality in \Cref{prop:max-square}, define
	\(
		\Delta(M)
		=
		h(h+1)-\sum_{x\in X}d_M(x)^2.
	\)
	Then \(\Delta(M)\ge2\). Subtracting the two instances of
	\eqref{eq:large-clique-expansion} yields
	\begin{align*}
		\rho(G^\star)-\rho(G)
		&=
		\frac{\Delta(M)}{N(N-1)}
		+
		O_{h,k}(N^{-3})\\
		&=
		\frac{\Delta(M)}{N^2}
		+
		O_{h,k}(N^{-3}),
	\end{align*}
	uniformly over all admissible choices of \(B^\star\) and \(B\).
	Since \(\Delta(M)\ge2\), the leading coefficient has a uniform
	positive lower bound. By
	\Cref{cor:uniform-lexicographic-comparison} and the definition of
	\(n_{\mathrm{int}}(k,\delta)\), the above difference is positive
	whenever \(n\ge n_{\mathrm{int}}(k,\delta)\). Consequently, no
	missing-edge graph with a strict squared-degree deficit can occur
	in a spectral-radius maximizer.

	It remains to consider the additional equality case in
	\Cref{prop:max-square}. This occurs only when \(h=3\), in which case
	\(
		M\cong K_3\cup5K_1.
	\)
	Here \(k\ge h+1=4\), so \Cref{clm:h3-tie} applies and resolves the
	tie in favor of
	\(
	K_{1,3}\cup4K_1.
	\)
	The comparison is uniform over all admissible placements, and its
	threshold is included in the definition of
	\(n_{\mathrm{int}}(k,\delta)\). Therefore, every spectral-radius
	maximizer has missing-edge graph isomorphic to \(M^\star\).
\end{proof}

   \begin{theorem}
	\label{thm:nested}
	Fix integers \(h\ge2\) and \(k\ge h+1\), set
	\(\delta=k+h\), and let
	\(
		M^\star
		:=
		K_{1,h}\cup(h+1)K_1.
	\)
	For every \(n\ge n_{\mathrm{int}}(k,\delta)\), let
	\(B^{\mathrm{nest}}\) be any nested admissible placement of
	\(M^\star\). Then, for every admissible placement \(B\),
	\(
		\rho\bigl(G_h(M^\star,B)\bigr)
		\le
		\rho\bigl(G_h(M^\star,B^{\mathrm{nest}})\bigr).
	\)
	Equality holds if and only if \(B\) is nested. Moreover, all nested
	placements are equivalent under a relabeling of the vertices of
	\(Y\). In particular, the nested placement is the unique
	spectral-radius-maximizing placement up to isomorphism.
\end{theorem}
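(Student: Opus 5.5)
With \(M^\star\) fixed, the plan is to compare placements using the fourth-order expansion \eqref{eq:fourth-order}, equivalently \eqref{eq:pure-power-expansion}, and to reduce the problem to a finite maximization of the squared-load sum \(\eta=\sum_{v\in C}\Lambda(v)^2\).

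\textbf{Step 1: only \(\eta\) depends on the placement.} Fix \(M=M^\star\), so \(A_X=A_{M^\star}\) and \(b=b_{M^\star}\) are independent of \(B\). Hence \(\gamma_2,\gamma_3,\gamma_4\) agree for all placements. By \Cref{cor:power-series-form}, any two placements satisfy
\[
\rho\bigl(G_h(M^\star,B)\bigr)-\rho\bigl(G_h(M^\star,B')\bigr)=\frac{\eta(B)-\eta(B')}{N^4}+O_{h,k}(N^{-5}),
\]
uniformly over the finitely many placement patterns (\Cref{rem:finite-patterns}). Every vertex of \(W\) is complete to \(X\), so its load \(\sum_{x\in X}b(x)\) is fixed. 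Every \(y\in Y_0\) has load \(0\). It therefore suffices to maximize \(\sum_{y\in Y}\Lambda(y)^2\) over admissible \(B\).

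\textbf{Step 2: the combinatorial optimization.} Put \(\beta_c=b(c)=a+h\) and \(\beta_\ell=b(\ell_i)=a+1\ge1\). By \Cref{prop:LH2-reparametrization}, \(d_B(c)=h\); since \(B\) is simple, \(c\) has \(h\) distinct \(Y\)-neighbours. Each leaf \(\ell_i\) has exactly one \(Y\)-neighbour, and the \(h+1\) isolated vertices of \(M^\star\) have none. Expanding the squares gives
\[
\sum_{y\in Y}\Lambda(y)^2=h\beta_c^2+h\beta_\ell^2+2\beta_c\beta_\ell\, p+2\beta_\ell^2\, q,
\]
where \(p\) is the number of leaves whose \(Y\)-neighbour lies in \(N_Y(c)\), and \(q\) is the number of unordered pairs of leaves with a common \(Y\)-neighbour. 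We have \(p\le h\), with equality iff every \(N_Y(\ell_i)\subseteq N_Y(c)\). We also have \(q\le\binom h2\), with equality iff all leaves share one \(Y\)-neighbour \(y_1\).

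Both maxima are attained simultaneously exactly when \(N_Y(\ell_i)=\{y_1\}\) for all \(i\) and \(y_1\in N_Y(c)\); this is the nested placement of \Cref{thm:intro-ordinary}(iii). Any non-nested placement loses at least \(\min\{2\beta_c\beta_\ell,\,2\beta_\ell^2\}\ge2\), a positive integer gap independent of \(N\).

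\textbf{Step 3: conclusion.} Apply \Cref{cor:uniform-lexicographic-comparison} with \(j=4\) and \(\varepsilon=2\) to get strict inequality against every non-nested placement for large \(n\). Since there are finitely many patterns, the threshold can be absorbed into \(n_{\mathrm{int}}(k,\delta)\). For the equivalence claim, any two nested placements differ only in the labels of \(N_Y(c)\), with \(y_1\) distinguished, so a relabelling of \(Y\) maps one to the other; in particular they have equal spectral radii.

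The main obstacle is Step 1: one must make sure the remainder in \eqref{eq:pure-power-expansion} is uniform at order \(N^{-5}\), and that the correction \(-\gamma_2^2/N\) inside \(b^{\mathsf T}R^{\mathsf T}\mathcal P_C Rb\) is placement-independent. Both follow from \Cref{cor:power-series-form} and the fact that \(b\) is fixed, exactly as in \Cref{lem:boundary-h}.
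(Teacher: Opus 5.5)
Your proof is correct. It shares the paper's analytic reduction: fix $M^\star$, use \eqref{eq:fourth-order} or \eqref{eq:pure-power-expansion}, discard the fixed $W$-loads and the fixed total-load correction, and maximize $\sum_{y\in Y}\Lambda(y)^2$. It differs from the paper in the combinatorial step.

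\textbf{The paper's route.} It argues by local exchanges, writing $\Lambda_B(y)=\alpha\varepsilon_y+\beta r_y$:
\begin{itemize}
\item If some leaf load lies outside $N_Y(c)$, swapping an edge $cz$ for $cy$ raises $\Phi$ by $2\alpha\beta(r_y-r_z)\ge2\alpha\beta$.
\item If two vertices of $N_Y(c)$ carry leaf load, moving a leaf edge raises $\Phi$ by $2\beta^2(r-s+1)\ge2\beta^2$.
\item These moves are iterated, each strictly increasing $\rho$, and the process terminates at a nested placement.
\end{itemize}

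\textbf{Your route.} You instead expand $\Phi$ exactly as $h\beta_c^2+h\beta_\ell^2+2\beta_c\beta_\ell p+2\beta_\ell^2 q$. This gives
\[
\Phi(B^{\mathrm{nest}})-\Phi(B)=2\beta_c\beta_\ell(h-p)+2\beta_\ell^2\Bigl(\tbinom{h}{2}-q\Bigr),
\]
where both brackets are nonnegative integers and both vanish exactly when $B$ is nested.

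\textbf{What your route buys.} You compare every placement directly against the nested one in a single step, with no termination argument. You never need to compare two intermediate non-nested placements. Your comparison is literally comparison type~4 as used to define $n_{\mathrm{int}}(k,\delta)$ in the common-threshold subsection. The paper's chain of local moves implicitly uses comparisons between possibly non-nested placements; this is harmless because the increments telescope, but those comparisons are not listed among the six types. Your identity also orders all placements at order $N^{-4}$ by the single quantity $\beta_c p+\beta_\ell q$.

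\textbf{What the paper's route buys.} Its exchange argument is closer in spirit to Perron-vector edge-moving. It would adapt more readily to cores where $\Phi$ has no clean closed form in a couple of integer statistics.
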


\begin{proof}
	Let \(c\) be the center of the \(K_{1,h}\) component of
	\(M^\star\), let \(\ell_1,\ldots,\ell_h\) be its leaves, and let
	\(I\) be the set of its \(h+1\) isolated vertices. Put
	\(
		N=|C|=n-2h-2,
		\;
		b=b_{M^\star}.
	\)
	For an admissible placement \(B\), let \(R_B\) denote the
	\(C\)--\(X\) adjacency matrix of \(G_h(M^\star,B)\).

	Once \(M^\star\) is fixed, both
	\(
		A_{M^\star}=A(K_X-M^\star)
		\; \text{and}\;
		b=b_{M^\star}
	\)
	are independent of the placement. It follows from
	\eqref{eq:fourth-order} that the first term in the expansion that
	can distinguish two placements is
	\(
		b^{\mathsf T}R_B^{\mathsf T}\mathcal P_C R_Bb.
	\)

	For \(v\in C\), define
	\(
		\Lambda_B(v)
		=
		\sum_{x\in N_X(v)}b(x).
	\)
	Since
	\(
	\mathcal P_C
	=
		I_N-N^{-1}\one_C\one_C^{\mathsf T},
	\)
	we have
	\[
		b^{\mathsf T}R_B^{\mathsf T}\mathcal P_C R_Bb
		=
		\sum_{v\in C}\Lambda_B(v)^2
		-
		\frac{1}{N}
		\left(\sum_{v\in C}\Lambda_B(v)\right)^2.
	\]
	Moreover,
	\[
		\sum_{v\in C}\Lambda_B(v)
		=
		\one_C^{\mathsf T}R_Bb
		=
		\bigl(R_B^{\mathsf T}\one_C\bigr)^{\mathsf T}b
		=
		b^{\mathsf T}b,
	\]
	which is independent of \(B\). The contribution
	\(
	\sum_{w\in W}\Lambda_B(w)^2
	\)
	is also fixed, since every vertex of \(W\) is adjacent to every
	vertex of \(X\). Consequently, maximizing the placement-sensitive
	term is equivalent to maximizing
	\(
		\Phi(B)
		:=
		\sum_{y\in Y}\Lambda_B(y)^2.
	\)

	Put
	\(
		\alpha=b(c)=k-1,
		\;
		\beta=b(\ell_i)=k-h
		\; (1\le i\le h).
	\)
	Then \(\alpha>\beta>0\). Each vertex of \(I\) has no neighbor in
	\(Y\), because
	\(
		d_B(x)=d_{M^\star}(x)=0
		\; (x\in I).
	\)

	All \(Y\)-neighborhoods below are taken with respect to the current
	placement. For each \(y\in Y\), define
	\[
		\varepsilon_y
		=
		\begin{cases}
			1,&y\in N_Y(c),\\
			0,&y\notin N_Y(c),
		\end{cases}
		\qquad
		r_y
		=
		\bigl|\{i:y\in N_Y(\ell_i)\}\bigr|.
	\]
	Since \(d_B(c)=h\) and \(d_B(\ell_i)=1\), we have
	\(
		\sum_{y\in Y}\varepsilon_y=h,
		\;
		\sum_{y\in Y}r_y=h,
	\)
	and
	\(
		\Lambda_B(y)
		=
		\alpha\varepsilon_y+\beta r_y.
	\)

	We first show that a placement having positive leaf load outside
	\(N_Y(c)\) cannot maximize \(\Phi\). Suppose that
	\(y\notin N_Y(c)\) and \(r_y>0\). There exists
	\(z\in N_Y(c)\) such that \(r_z<r_y\). Indeed, otherwise,
	\[
		\sum_{u\in Y}r_u
		\ge
		\sum_{u\in N_Y(c)}r_u+r_y
		\ge
		hr_y+r_y
		\ge
		h+1,
	\]
	contrary to \(\sum_{u\in Y}r_u=h\).

	Replace the edge \(cz\) with \(cy\), and denote the resulting
	placement by \(B'\). This replacement preserves all prescribed
	\(X\)-side degrees and hence produces another admissible placement.
	Only the loads at \(y\) and \(z\) change, and
	\begin{align*}
		\Phi(B')-\Phi(B)
		&=
		(\alpha+\beta r_y)^2+(\beta r_z)^2\\
		&\quad
		-(\beta r_y)^2-(\alpha+\beta r_z)^2\\
		&=
		2\alpha\beta(r_y-r_z)\\
		&\ge
		2\alpha\beta.
	\end{align*}
	Thus every placement maximizing \(\Phi\) satisfies
	\(
		r_y>0
		\;\Longrightarrow\;
		y\in N_Y(c).
	\)

	It remains to distribute the \(h\) attachment edges incident with
	the leaves among the \(h\) vertices of \(N_Y(c)\). Suppose that two
	distinct vertices \(y,z\in N_Y(c)\) have positive leaf loads
	\(
		r_y=r\ge s=r_z>0.
	\)
	Choose a leaf \(\ell_i\) adjacent to \(z\), and replace
	\(\ell_i z\) with \(\ell_i y\). Since \(z\) is the unique
	\(Y\)-neighbor of \(\ell_i\), the edge \(\ell_i y\) is not already
	present. Thus the replacement is admissible. If \(B'\) denotes the
	resulting placement, then
	\begin{align*}
		\Phi(B')-\Phi(B)
		&=
		(\alpha+\beta(r+1))^2
		+
		(\alpha+\beta(s-1))^2\\
		&\quad
		-
		(\alpha+\beta r)^2
		-
		(\alpha+\beta s)^2\\
		&=
		2\beta^2(r-s+1)\\
		&\ge
		2\beta^2.
	\end{align*}
	Therefore, in every placement maximizing \(\Phi\), all \(h\)
	attachment edges incident with the leaves have a common endpoint
	in \(N_Y(c)\). By the definition of a nested placement, this is
	precisely the nested configuration.

	More quantitatively, the two preceding edge replacements show that
	every non-nested placement \(B\) admits an admissible placement
	\(B'\) such that
	\(
		\Phi(B')-\Phi(B)
		\ge
		\min\{2\alpha\beta,2\beta^2\}
		=
		2\beta^2.
	\)
	Since the total-load correction and the contribution from \(W\)
	are fixed, it follows that
	\(
		b^{\mathsf T}R_{B'}^{\mathsf T}\mathcal P_C R_{B'}b
		-
		b^{\mathsf T}R_B^{\mathsf T}\mathcal P_C R_Bb
		=
		\Phi(B')-\Phi(B).
	\)
	Using \eqref{eq:fourth-order}, we obtain
	\[
		\rho\bigl(G_h(M^\star,B')\bigr)
		-
		\rho\bigl(G_h(M^\star,B)\bigr)
		=
		\frac{\Phi(B')-\Phi(B)}
		{N^2(N-1)^2}
		+
		O_{h,k}(N^{-5}),
	\]
	where the remainder is uniform over all admissible placements.

	Since \(\Phi(B')-\Phi(B)\ge2\beta^2\) and
	\(\beta=k-h\ge1\), the leading term is positive and has order
	\(N^{-4}\), whereas the uniform remainder has order \(N^{-5}\).
	By \Cref{cor:uniform-lexicographic-comparison} and the definition of
	\(n_{\mathrm{int}}(k,\delta)\), every such admissible replacement
	strictly increases the spectral radius whenever
	\(n\ge n_{\mathrm{int}}(k,\delta)\).

	Starting from any non-nested placement and repeatedly applying one
	of the preceding replacements therefore produces a strictly
	increasing sequence of values of both \(\Phi\) and the spectral
	radius. The process terminates because, for fixed \(n\), there are
	only finitely many admissible placements. At termination, no leaf
	load lies outside \(N_Y(c)\), and at most one vertex of \(N_Y(c)\)
	has positive leaf load. Since the total leaf load is \(h\), the
	terminal placement is nested. Hence every non-nested placement has
	strictly smaller spectral radius than a nested placement.

	Finally, all nested placements are equivalent under a relabeling
	of the vertices of \(Y\). Therefore they yield isomorphic graphs
	and have the same spectral radius. This proves the asserted
	maximality, equality characterization, and uniqueness up to
	isomorphism.
\end{proof}

\begin{proof}[Proof of \Cref{thm:intro-ordinary}]
	The cases \(h=0\) and \(h=1\) follow from
	\Cref{lem:boundary-h}. Now assume \(h\ge2\).
	By \Cref{thm:star-core}, every spectral-radius maximizer has
	missing-edge graph isomorphic to
	\(
		M^\star=K_{1,h}\cup(h+1)K_1.
	\)
	Once the missing-edge graph is fixed, \Cref{thm:nested} shows that
	the placement must be nested. Since all nested placements are
	equivalent up to isomorphism, \(G^\star_{h,k,n}\) is the unique
	maximizer up to isomorphism, as claimed.
\end{proof}
	
	\subsection{Minimum spectral radius}
\label{subsec:spectral-minimum-exceptional-family}

The large-clique expansion also determines the minimum spectral radius
within
\(\mathcal{L}_{\mathcal H}^{2}(n,k,\delta)\). In contrast to the
maximizer, which concentrates both the degrees of the missing-edge graph
and the \(Y\)-side loads, the minimizer disperses both quantities as
evenly as permitted by the defining constraints. This
concentration--dispersion dichotomy determines the two endpoints of the
spectral range within the exceptional family.

The proof again separates core selection from placement selection. Since
\(d^2\ge d\) for every nonnegative integer \(d\), we have
\[
	\sum_{x\in X}d_M(x)^2
	\ge
	\sum_{x\in X}d_M(x)
	=
	2h.
\]
Equality holds if and only if every vertex of \(M\) has degree at most
one, which selects the matching
\(
hK_2\cup2K_1.
\)
Once this missing-edge graph is fixed, minimizing the \(Y\)-side
squared-load sum forces the \(2h\) exceptional \(X\)--\(Y\) edges to
have pairwise distinct endpoints in \(Y\).

When \(h=0\), let \(G^{\min}_{0,k,n}\) denote the unique isomorphism
type in \(\mathcal{L}_{\mathcal H}^{2}(n,k,k)\). Now assume that
\(h\ge1\). We define
\(
	G^{\min}_{h,k,n}
	=
	G_h\bigl(M^{\min},B^{\mathrm{disp}}\bigr)
\)
as follows. Write
\(
	X=U\mathbin{\dot\cup}I,
	\;
	U=\{u_1,v_1,\ldots,u_h,v_h\},
	\;
	|I|=2,
\)
and let \(M^{\min}\) be the missing-edge graph on \(X\) defined by
\(
	E(M^{\min})
	=
	\{u_iv_i:1\le i\le h\}.
\)
Thus
\(
	M^{\min}\cong hK_2\cup2K_1.
\)

Under the standing assumptions \(n\ge2\delta+2\) and \(k\ge h+1\),
\begin{align*}
	|Y|
	&=
	n-k-h-1\\
	&=
	n-\delta-1\\
	&\ge
	\delta+1\\
	&=
	k+h+1\\
	&\ge
	2h+2.
\end{align*}
We may therefore choose pairwise distinct vertices
\(
y_1,\ldots,y_{2h}\in Y.
\)
Define the dispersed placement \(B^{\mathrm{disp}}\) by
\begin{equation}
	\label{eq:minimum-dispersed-placement}
	N_Y(u_i)=\{y_{2i-1}\},
	\qquad
	N_Y(v_i)=\{y_{2i}\},
	\qquad
	1\le i\le h,
\end{equation}
and set
\(
N_Y(x)=\varnothing
\)
for every \(x\in I\). There are no other edges between \(X\) and \(Y\).

For every \(x\in X\), the construction gives
\(
	d_{B^{\mathrm{disp}}}(x)
	=
	d_{M^{\min}}(x).
\)
Indeed, both degrees are equal to \(1\) for \(x\in U\) and to \(0\)
for \(x\in I\). Moreover,
\(
	e(M^{\min})=h,
	\;
	e(B^{\mathrm{disp}})=2h.
\)
Hence \((M^{\min},B^{\mathrm{disp}})\) is an admissible pair by
\Cref{prop:LH2-reparametrization}. All remaining adjacencies are
determined by the parametrization:
\(
	G[X]=K_X-M^{\min},
\)
the set \(W\cup Y\) induces a clique, and \(W\) is complete to \(X\).

Any two choices of the matching labels and of the \(2h\) distinct
vertices of \(Y\) are related by permutations of \(X\), \(W\), and
\(Y\) that preserve the three parts. Consequently,
\(G^{\min}_{h,k,n}\) is well-defined up to isomorphism.
	
	\begin{proof}[Proof of \Cref{thm:intro-ordinary-minimum}]
	If \(h=0\), then \(E(M)=E(B)=\varnothing\), and the family contains
	a single isomorphism type. Hence the assertion is immediate. Assume
	henceforth that \(h\ge1\).

	We use the notation of
	\Cref{prop:LH2-reparametrization,lem:large-clique}. In particular,
	\(
		|X|=t=2h+2,
		\;
		a=k-h-1,
		\;
		C=W\cup Y,
		\;
		N=|C|=n-2h-2.
	\)
	Every graph in
	\(\mathcal{L}_{\mathcal H}^{2}(n,k,\delta)\) can be written as
	\(G_h(M,B)\), where \(e(M)=h\) and
	\(
		d_B(x)=d_M(x)
		\; (x\in X).
	\)
	For an admissible placement \(B\), let \(R_B\) be the
	\(N\times t\) adjacency matrix between \(C\) and \(X\), and put
	\(
		A_M=A(K_X-M).
	\)
	By \eqref{eq:bM},
	\(
		b_M
		=
		R_B^{\mathsf T}\one_C
		=
		a\one_X+d_M,
	\)
	where \(d_M=(d_M(x))_{x\in X}\).

	We first determine the missing-edge graph. Since
	\(
	\sum_{x\in X}d_M(x)=2h,
	\)
	we have
	\begin{align}
		b_M^{\mathsf T}b_M
		&=
		\sum_{x\in X}\bigl(a+d_M(x)\bigr)^2 \notag\\
		&=
		ta^2+4ah+\sum_{x\in X}d_M(x)^2.
		\label{eq:minimum-leading-objective}
	\end{align}
	For every nonnegative integer \(r\), we have \(r^2\ge r\), with
	equality if and only if \(r\in\{0,1\}\). Consequently,
	\begin{equation}
		\sum_{x\in X}d_M(x)^2
		\ge
		\sum_{x\in X}d_M(x)
		=
		2h.
		\label{eq:minimum-square-degree-bound}
	\end{equation}
	Equality holds if and only if
	\(
		d_M(x)\in\{0,1\}
		\; (x\in X).
	\)
	Since \(M\) has \(h\) edges on \(2h+2\) vertices, this is
	equivalent to
	\(
		M\cong hK_2\cup2K_1.
	\)

	If \(M\) is not a matching, define
	\(
		\Delta_M
		:=
		\sum_{x\in X}d_M(x)^2-2h
		=
		\sum_{x\in X}d_M(x)\bigl(d_M(x)-1\bigr).
	\)
	Then \(\Delta_M\ge2\), because at least one vertex has degree at
	least \(2\).

	Let
	\(
		M^{\min}:=hK_2\cup2K_1,
	\)
	and fix any admissible placement \(B_0\) of \(M^{\min}\).
	Subtracting the two instances of
	\eqref{eq:large-clique-expansion} gives
	\begin{align*}
		&\rho\bigl(G_h(M,B)\bigr)
		-
		\rho\bigl(G_h(M^{\min},B_0)\bigr)\\
		&\qquad=
		\frac{\Delta_M}{N(N-1)}
		+
		O_{h,k}(N^{-3})\\
		&\qquad=
		\frac{\Delta_M}{N^2}
		+
		O_{h,k}(N^{-3}),
	\end{align*}
	uniformly over all admissible \(M\), \(B\), and \(B_0\).
	Since \(\Delta_M\ge2\), the leading coefficient has a uniform
	positive lower bound. By
	\Cref{cor:uniform-lexicographic-comparison} and the definition of
	\(n_{\mathrm{int}}(k,\delta)\), the above difference is positive
	for every \(n\ge n_{\mathrm{int}}(k,\delta)\). Thus the
	missing-edge graph of every spectral-radius minimizer is isomorphic
	to
	\(
		M^{\min}=hK_2\cup2K_1.
	\)

	It remains to minimize over the admissible placements of
	\(M^{\min}\). Let
	\(
		U
		=
		\{x\in X:d_{M^{\min}}(x)=1\}
	\)
	be the set of endpoints of the matching edges, and put
	\(
	I=X\setminus U.
	\)
	Then
	\(
		|U|=2h,
		\;
		|I|=2.
	\)
	Set
	\(
		\beta=a+1=k-h>0.
	\)
	For \(x\in X\),
	\[
		b_{M^{\min}}(x)
		=
		\begin{cases}
			\beta,&x\in U,\\
			a,&x\in I.
		\end{cases}
	\]
	Moreover, every admissible placement \(B\) satisfies
	\[
		d_B(x)
		=
		\begin{cases}
			1,&x\in U,\\
			0,&x\in I.
		\end{cases}
	\]
	Thus every vertex of \(U\) has exactly one neighbor in \(Y\), while
	the two vertices of \(I\) have no neighbors in \(Y\).

	For \(y\in Y\), define
	\(
		r_y
		=
		|N_B(y)|
		=
		|N_G(y)\cap U|.
	\)
	The prescribed \(X\)-side degrees give
	\begin{equation}
		r_y\in\mathbb Z_{\ge0},
		\qquad
		\sum_{y\in Y}r_y=2h.
		\label{eq:minimum-load-constraints}
	\end{equation}

	Put \(b=b_{M^{\min}}\). For \(v\in C\), define
	\(
		\Lambda_B(v)
		=
		\sum_{x\in N_X(v)}b(x).
	\)
	Once \(M^{\min}\) is fixed, both \(A_{M^{\min}}\) and \(b\) are
	fixed. By \eqref{eq:fourth-order}, the first
	placement-sensitive quantity is
	\(
		b^{\mathsf T}
		R_B^{\mathsf T}\mathcal P_C R_Bb.
	\)
	It satisfies
	\[
		b^{\mathsf T}R_B^{\mathsf T}\mathcal P_C R_Bb
		=
		\sum_{v\in C}\Lambda_B(v)^2
		\quad-
		\frac{1}{N}
		\left(\sum_{v\in C}\Lambda_B(v)\right)^2.
	\]
	The total load is independent of \(B\), because
	\(
		\sum_{v\in C}\Lambda_B(v)
		=
		\one_C^{\mathsf T}R_Bb
		=
		\bigl(R_B^{\mathsf T}\one_C\bigr)^{\mathsf T}b
		=
		b^{\mathsf T}b.
	\)
	The contribution from \(W\) is also independent of the placement,
	since every vertex of \(W\) is adjacent to every vertex of \(X\).

	For \(y\in Y\), all neighbors of \(y\) in \(X\) lie in \(U\), and
	each has \(b\)-weight \(\beta\). Therefore,
	\(
		\Lambda_B(y)=\beta r_y.
	\)
	It follows that minimizing the placement-sensitive quantity is
	equivalent to minimizing
	\(
		\sum_{y\in Y}\Lambda_B(y)^2
		=
		\beta^2\sum_{y\in Y}r_y^2.
	\)
	By \eqref{eq:minimum-load-constraints},
	\begin{equation}
		\sum_{y\in Y}r_y^2
		\ge
		\sum_{y\in Y}r_y
		=
		2h,
		\label{eq:minimum-load-square-bound}
	\end{equation}
	with equality if and only if
	\(
		r_y\in\{0,1\}
		\; (y\in Y).
	\)

	Since
	\(
	n\ge n_{\mathrm{int}}(k,\delta)\ge2\delta+2
	\)
	and \(k\ge h+1\), we have
	\(
		|Y|
		=
		n-k-h-1
		\ge
		k+h+1
		\ge
		2h+2.
	\)
	Hence equality in \eqref{eq:minimum-load-square-bound} is
	attainable. It holds precisely when the \(2h\) exceptional
	\(X\)--\(Y\) edges have pairwise distinct \(Y\)-endpoints, that is,
	when the placement is dispersed.

	If \(B\) is not dispersed, define
	\(
		\Delta_B
		:=
		\sum_{y\in Y}r_y^2-2h
		=
		\sum_{y\in Y}r_y(r_y-1).
	\)
	Then \(\Delta_B\ge2\). Let \(B^{\mathrm{disp}}\) be a dispersed
	placement, and let \(R_{B^{\mathrm{disp}}}\) be its corresponding
	\(C\)-by-\(X\) adjacency matrix. Since the total-load correction
	and the contribution from \(W\) are fixed,
	\(
		b^{\mathsf T}R_B^{\mathsf T}\mathcal P_C R_Bb
		-
		b^{\mathsf T}
		R_{B^{\mathrm{disp}}}^{\mathsf T}
		\mathcal P_C
		R_{B^{\mathrm{disp}}}b
		=
		\beta^2\Delta_B.
	\)
	All terms preceding the placement-sensitive term in
	\eqref{eq:fourth-order} agree. Hence, uniformly over all admissible
	placements,
	\begin{align*}
		&\rho\bigl(G_h(M^{\min},B)\bigr)
		-
		\rho\bigl(G_h(M^{\min},B^{\mathrm{disp}})\bigr)\\
		&\qquad=
		\frac{\beta^2\Delta_B}{N^2(N-1)^2}
		+
		O_{h,k}(N^{-5})\\
		&\qquad=
		\frac{\beta^2\Delta_B}{N^4}
		+
		O_{h,k}(N^{-5}).
	\end{align*}
	The leading coefficient is at least
	\(
	2\beta^2>0.
	\)
	Therefore, by
	\Cref{cor:uniform-lexicographic-comparison} and the definition of
	\(n_{\mathrm{int}}(k,\delta)\), every non-dispersed placement has
	strictly larger spectral radius whenever
	\(n\ge n_{\mathrm{int}}(k,\delta)\). Thus every minimizer has a
	dispersed placement.

	Finally, all copies of \(hK_2\cup2K_1\) on \(X\) are equivalent
	under a relabeling of \(X\). For a fixed copy of \(M^{\min}\), any
	two dispersed placements are equivalent under a relabeling of
	\(Y\). Hence all graphs obtained from the matching core with a
	dispersed placement are mutually isomorphic. The preceding strict
	comparisons exclude every other missing-edge graph and every other
	placement, proving both the asserted minimum and its uniqueness up
	to isomorphism.
\end{proof}

	\subsubsection{A common internal threshold}
\label{subsec:common-threshold}

All asymptotic estimates used above were obtained before a common
threshold was specified. We now define such a threshold and thereby make
every preceding occurrence of ``sufficiently large'' precise.

Fix integers \(k\ge2\) and \(\delta\) satisfying
\(
	k\le\delta\le2k-1,
\)
put \(h=\delta-k\), and let
\(
	N=n-2h-2.
\)
The preceding proofs reduce to six types of pairwise spectral
comparisons. The first four establish the maximizing structure:
\begin{enumerate}
	\item when \(h=1\), the common-endpoint placement has larger spectral
	radius than the distinct-endpoint placement;
	\item the star core has larger spectral radius than any core with a
	strictly smaller degree-square sum;
	\item when \(h=3\), the star core has larger spectral radius than the
	triangle core;
	\item among placements of the star core, the nested placement has
	larger spectral radius than any non-nested placement.
\end{enumerate}
The remaining two establish the minimizing structure:
\begin{enumerate}
	\setcounter{enumi}{4}
	\item every nonmatching core has larger spectral radius than the
	matching core;
	\item among placements of the matching core, every non-dispersed
	placement has larger spectral radius than the dispersed placement.
\end{enumerate}
Only the applicable comparison types are included. Let
\(\mathfrak C_{h,k}\) denote the set of all directed comparisons of
these types, taken up to relabeling of \(X\) and of the active vertices
of \(Y\). By \Cref{rem:finite-patterns}, the set
\(\mathfrak C_{h,k}\) is finite.

For each \(\mathfrak c\in\mathfrak C_{h,k}\), let
\(D_{\mathfrak c}(N)\) denote the corresponding oriented spectral
difference. The uniform estimates established above give constants
\(
	j_{\mathfrak c}\in\{2,3,4\},
	\;
	\varepsilon_{\mathfrak c}>0,
	\;
	C_{\mathfrak c}\ge0,
	\;
	N_{\mathfrak c}\in\mathbb N,
\)
such that
\(
	D_{\mathfrak c}(N)
	\ge
	\varepsilon_{\mathfrak c}N^{-j_{\mathfrak c}}
	-
	C_{\mathfrak c}N^{-j_{\mathfrak c}-1}
	\;
	(N\ge N_{\mathfrak c}).
\)
Here \(C_{\mathfrak c}\) and \(N_{\mathfrak c}\) are independent of
\(N\) and of the labeled realization of the corresponding comparison.
Since \(h\) and \(k\) are fixed and
\(\mathfrak C_{h,k}\) is finite, all these constants ultimately depend
only on \(h\) and \(k\). We choose each \(N_{\mathfrak c}\) sufficiently
large that the relevant instance of \Cref{lem:large-clique}, together
with its uniform remainder estimate, applies.

In the order listed above, uniform positive lower bounds for the leading
coefficients may be chosen as
\(
	2(k-1)^2,
	\;
	2,
	\;
	6,
	\;
	2(k-h)^2,
	\;
	2,
	\;
	2(k-h)^2,
\)
with the inapplicable cases omitted. Define
\[
	N_{\mathrm{int}}(h,k)
	:=
	\max_{\mathfrak c\in\mathfrak C_{h,k}}
	\left\{
		N_{\mathfrak c},
		\left\lfloor
		\frac{2C_{\mathfrak c}}
		{\varepsilon_{\mathfrak c}}
		\right\rfloor+1
	\right\},
\]
where the maximum over the empty set is understood to be \(0\), and set
\begin{equation}
	\label{eq:rigorous-nint}
	n_{\mathrm{int}}(k,\delta)
	:=
	\max
	\left\{
		2\delta+2,\,
		2h+2+N_{\mathrm{int}}(h,k)
	\right\}.
\end{equation}

Indeed, if \(n\ge n_{\mathrm{int}}(k,\delta)\), then
\(N\ge N_{\mathrm{int}}(h,k)\). Hence, for every
\(\mathfrak c\in\mathfrak C_{h,k}\),
\(
	N>
	\frac{2C_{\mathfrak c}}{\varepsilon_{\mathfrak c}},
\)
and therefore
\[
	D_{\mathfrak c}(N)
	\ge
	N^{-j_{\mathfrak c}}
	\left(
	\varepsilon_{\mathfrak c}
	-
	\frac{C_{\mathfrak c}}{N}
	\right)
	>
	\frac{\varepsilon_{\mathfrak c}}{2}
	N^{-j_{\mathfrak c}}
	>
	0.
\]
Thus every required directed spectral difference has the asserted
strict sign whenever \(n\ge n_{\mathrm{int}}(k,\delta)\). This proves
the existence of a common internal threshold using only the finite
collection of uniform remainder bounds and positive leading gaps,
without defining the threshold in terms of the conclusions of either
extremal theorem.

\begin{remark}[Concentration versus dispersion]
	\label{rem:concentration-versus-dispersion}
	For \(h\ge1\), the two endpoints of the spectral range are governed
	by opposite placement principles: the spectral maximizer
	concentrates the weighted loads on the \(Y\)-side, whereas the
	spectral minimizer disperses them.

	For \(h\ge2\), the two extremizers also have different missing-edge
	graphs. The maximizer has
	\(
		M^\star
		\cong
		K_{1,h}\cup(h+1)K_1,
	\)
	whereas the minimizer has
	\(
		M^{\min}
		\cong
		hK_2\cup2K_1.
	\)
	When \(h=1\), the missing-edge graph is the same at both endpoints,
	and the distinction lies entirely in the placement of the two
	exceptional \(X\)--\(Y\) edges: they have a common endpoint in \(Y\)
	at the spectral maximum and distinct endpoints in \(Y\) at the
	spectral minimum.

	Thus, for \(h\ge2\), the matching missing-edge graph proposed by
	Chang--Li--Zhang \cite{ChangLiZhang2026} is the missing-edge graph
	of the spectral minimizer rather than that of the spectral
	maximizer.

	More generally, for every \(h\ge1\), consider the class with
	\(
		M\cong hK_2\cup2K_1.
	\)
	If \(r_y\) denotes the number of exceptional \(X\)--\(Y\) edges
	incident with \(y\), then the variable part of the first
	placement-sensitive coefficient is proportional to
	\(
		\sum_{y\in Y}r_y^2,
		\;
		\sum_{y\in Y}r_y=2h.
	\)
	Consequently, this coefficient is maximized when all \(2h\)
	exceptional edges have a common endpoint in \(Y\), and minimized
	when their \(Y\)-endpoints are pairwise distinct.
\end{remark}

	\subsection{Quotient matrices and the sharp threshold}
	\label{subsec:quotients-final-threshold}

	The resolvent expansion determines the extremal structures. The quotient
	matrices below serve a different purpose: they turn the two extremal
	spectral radii into Perron roots of matrices of bounded order and hence
	make the sharp threshold directly computable.

	The minimum spectral radius can also be expressed as
	the Perron root of an explicit quotient matrix. Assume \(h\ge1\), put
	\(
	a=k-h-1,
	\)
	and, in the construction of \(G^{\min}_{h,k,n}\), let
		\(
		Z=\{y_1,\ldots,y_{2h}\},
		\;
		Y_0=Y\setminus Z,
		\;
		q_{\min}=|Y_0|=n-k-3h-1.
	\)
	With rows and columns ordered as
	\(
	A,\ I,\ W,\ Z,\ Y_0,
	\)
	define
	\begin{equation}
		Q^{\min}_{h,k,n}
		=
		\begin{pmatrix}
			2h-2 & 2 & a & 1 & 0\\
			2h   & 1 & a & 0 & 0\\
			2h   & 2 & a-1 & 2h & q_{\min}\\
			1    & 0 & a & 2h-1 & q_{\min}\\
			0    & 0 & a & 2h & q_{\min}-1
		\end{pmatrix}.
		\label{eq:minimum-quotient}
	\end{equation}
	When \(a=0\), the class \(W\) is empty, and
	\(Q^{\min}_{h,k,n}\) is understood to be the matrix obtained from
	\eqref{eq:minimum-quotient} by deleting the third row and third column.
	
	\begin{proposition}
		\label{prop:minimum-equitable-quotient}
		Fix integers \(h\ge1\) and \(k\ge h+1\), and let
		\(
		n\ge n_{\mathrm{int}}(k,k+h).
		\)
		The partition
		\(
		A,\ I,\ W,\ Z,\ Y_0
		\)
		of \(G^{\min}_{h,k,n}\), after deleting the empty class \(W\) when
		\(a=0\), is equitable. Its class sizes are
		\(
		\bigl(2h,\,2,\,a,\,2h,\,q_{\min}\bigr),
		\)
		and its quotient matrix is \(Q^{\min}_{h,k,n}\). Consequently,
		\[
		\min_{
			G\in\mathcal{L}_{\mathcal{H}}^{2}(n,k,k+h)
		}
		\rho(G)
		=
		\rho\bigl(G^{\min}_{h,k,n}\bigr)
		=
		\rho\bigl(Q^{\min}_{h,k,n}\bigr).
		\]
	\end{proposition}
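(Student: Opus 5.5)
The plan is to verify equitability directly, class by class, from the adjacency rules of \(G^{\min}_{h,k,n}=G_h(M^{\min},B^{\mathrm{disp}})\). The class labelled \(A\) in the quotient is the set \(U=\{u_1,v_1,\ldots,u_h,v_h\}\) of matching endpoints. The remaining classes are \(I\), \(W\), \(Z=\{y_1,\ldots,y_{2h}\}\) and \(Y_0=Y\setminus Z\). The class sizes \((2h,2,a,2h,q_{\min})\) come from the construction, with \(q_{\min}=|Y|-2h=n-k-h-1-2h\). The hypothesis \(n\ge n_{\mathrm{int}}\ge2\delta+2\) gives \(q_{\min}\ge k-h+1\ge1\), so every listed class except possibly \(W\) is nonempty. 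We handle \(a=0\) by deleting \(W\).

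Next we compute the row counts for each class and check that every vertex of the class has the same number of neighbours in each other class.
\begin{enumerate}
	\item A vertex \(u_i\in U\) is adjacent in \(K_X-M^{\min}\) to all of \(X\) except itself and its partner \(v_i\). This gives \(2h-2\) neighbours in \(U\) and \(2\) in \(I\). It has \(a\) neighbours in \(W\), exactly one neighbour in \(Z\) (its private \(y_j\)), and none in \(Y_0\).
	\item A vertex of \(I\) has \(2h\) neighbours in \(U\), \(1\) in \(I\), \(a\) in \(W\), and nothing in \(Y\), since \(d_B=0\) on \(I\).
	\item A vertex of \(W\) is adjacent to all of \(X\) and all of \(W\cup Y\) except itself, giving the row \((2h,2,a-1,2h,q_{\min})\).
	\item A vertex \(y_j\in Z\) has exactly one \(X\)-neighbour, and it lies in \(U\), because the \(2h\) endpoints are pairwise distinct. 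Its remaining neighbours are all of \(W\), the other \(2h-1\) vertices of \(Z\), and all of \(Y_0\).
	\item A vertex of \(Y_0\) has no \(X\)-neighbour, is adjacent to all of \(W\) and \(Z\), and has \(q_{\min}-1\) neighbours in \(Y_0\).
\end{enumerate}
These rows match \eqref{eq:minimum-quotient}, and deleting the \(W\)-row and \(W\)-column for \(a=0\) is consistent, since then no vertex has a neighbour in \(W\).

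Then we apply \Cref{lem:equitable-quotient}, which requires connectivity of \(G^{\min}_{h,k,n}\). Connectivity holds by \Cref{prop:LH2-reparametrization}: for \(h\ge1\) the graph is connected through \(B\). This gives \(\rho(G^{\min}_{h,k,n})=\rho(Q^{\min}_{h,k,n})\). Finally, \Cref{thm:intro-ordinary-minimum} identifies \(G^{\min}_{h,k,n}\) as the minimizer for \(n\ge n_{\mathrm{int}}(k,k+h)\), which yields the displayed chain of equalities.

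There is no real obstacle; the step needing the most care is the \(Z\)-row. Its entry \(1\) in the \(A\)-column, with \(0\) elsewhere in \(X\), relies on dispersion: each \(y_j\) is hit by exactly one exceptional edge, and that edge comes from \(U\). We also need the labelling convention that \(A\) means \(U\).
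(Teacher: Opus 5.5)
Your proposal is correct and follows essentially the same route as the paper: a class-by-class check of the neighbour counts in \(G^{\min}_{h,k,n}\) (reading \(A\) as the matching-endpoint set \(U\)), then \Cref{lem:equitable-quotient}, then \Cref{thm:intro-ordinary-minimum} for the minimization identity. Your explicit checks that the classes are nonempty (\(q_{\min}\ge k-h+1\ge1\)) and that the graph is connected are hypotheses of the lemma that the paper leaves implicit, so including them is a small improvement.
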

	
	\begin{proof}
		The graph induced by \(A\) is \(K_{2h}\) with a perfect matching
		removed. Hence every vertex of \(A\) has \(2h-2\) neighbors in \(A\),
		two neighbors in \(I\), \(a\) neighbors in \(W\), and one private
		neighbor in \(Z\).
		
		Each vertex of \(I\) is adjacent to all \(2h\) vertices of \(A\), to
		the other vertex of \(I\), and to every vertex of \(W\), but has no
		neighbor in \(Z\cup Y_0\). Every vertex of \(W\) is universal. Each
		vertex of \(Z\) has exactly one neighbor in \(A\), no neighbor in
		\(I\), and is adjacent to all vertices of
		\(W\cup(Z\setminus\{z\})\cup Y_0\).
		Finally, the vertices of \(Y_0\) have no neighbors in \(X\) and are
		adjacent to every vertex of
		\(W\cup Z\cup(Y_0\setminus\{y\})\).
		
		Thus the displayed partition is equitable, and the corresponding
		neighbor counts give exactly the matrix in
		\eqref{eq:minimum-quotient}. By
		\Cref{lem:equitable-quotient},
		\(\rho(G^{\min}_{h,k,n})=\rho(Q^{\min}_{h,k,n})\).
		The minimization identity follows from
		\Cref{thm:intro-ordinary-minimum}.
	\end{proof}
	
	\subsubsection{The maximum quotient and the global threshold}
	
	For \(h\ge2\), let \(G^\star_{h,k,n}\) be the maximizing graph identified
	in \Cref{thm:intro-ordinary}, and put
	\(
	w=k-h-1,
	\;
	q_{\star}=n-k-2h-1.
	\)
	Write
	\(
	X=\{c\}\mathbin{\dot\cup}L\mathbin{\dot\cup}I,
	\;
	|L|=h,
	\;
	|I|=h+1,
	\)
	where \(c\) is the center and \(L\) is the set of leaves of the star
	missing-edge graph. Under the nested placement, write
	\(
	Y=\{y_1\}\mathbin{\dot\cup}Y_2\mathbin{\dot\cup}Y_3,
	\;
	|Y_2|=h-1,
	\;
	|Y_3|=q_{\star}.
	\)
	Here \(y_1\) is the common \(Y\)-neighbor of all vertices in \(L\), and
	\(\{y_1\}\cup Y_2=N_Y(c)\).
	
	With rows and columns ordered as
	\(
	\{c\},\ L,\ I,\ W,\ \{y_1\},\ Y_2,\ Y_3,
	\)
	define
	\begin{equation}
		Q^\star_{h,k,n}
		=
		\begin{pmatrix}
			0 & 0 & h+1 & w & 1 & h-1 & 0\\
			0 & h-1 & h+1 & w & 1 & 0 & 0\\
			1 & h & h & w & 0 & 0 & 0\\
			1 & h & h+1 & w-1 & 1 & h-1 & q_{\star}\\
			1 & h & 0 & w & 0 & h-1 & q_{\star}\\
			1 & 0 & 0 & w & 1 & h-2 & q_{\star}\\
			0 & 0 & 0 & w & 1 & h-1 & q_{\star}-1
		\end{pmatrix}.
		\label{eq:maximum-quotient}
	\end{equation}
	When \(w=0\), the class \(W\) is empty, and
	\(Q^\star_{h,k,n}\) is understood to be the matrix obtained from
	\eqref{eq:maximum-quotient} by deleting the fourth row and fourth
	column.
	
	\begin{proposition}
		\label{prop:maximum-equitable-quotient}
		Fix integers \(h\ge2\) and \(k\ge h+1\), and let
		\(n\ge n_{\mathrm{int}}(k,k+h)\). The partition
		\(
		\{c\},\ L,\ I,\ W,\ \{y_1\},\ Y_2,\ Y_3
		\)
		of \(G^\star_{h,k,n}\), after deleting the empty class \(W\) when
		\(w=0\), is equitable. Its class sizes are
		\(
		\bigl(1,\,h,\,h+1,\,w,\,1,\,h-1,\,q_{\star}\bigr),
		\)
		and its quotient matrix is \(Q^\star_{h,k,n}\). Consequently,
		\(
		\rho\bigl(G^\star_{h,k,n}\bigr)
		=
		\rho\bigl(Q^\star_{h,k,n}\bigr).
		\)
	\end{proposition}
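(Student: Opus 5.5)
The plan is to verify equitability directly from the explicit description of \(G^\star_{h,k,n}\) given in \Cref{thm:intro-ordinary}(iii), and then to invoke \Cref{lem:equitable-quotient}. First I recall the adjacencies from \Cref{prop:LH2-reparametrization}:
\begin{itemize}
\item \(G[X]=K_X-M^\star\), where \(M^\star\) is the star with center \(c\) and leaf set \(L\), together with the isolated set \(I\);
\item \(W\cup Y\) is a clique and \(W\) is complete to \(X\);
\item the only \(X\)--\(Y\) edges are \(c y\) for \(y\in N_Y(c)=\{y_1\}\cup Y_2\), and \(\ell y_1\) for every \(\ell\in L\).
\end{itemize}
The class sizes are \(1,h,h+1,w\), then \(1\) and \(h-1\), and finally \(q_\star\). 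The last one follows from \(|Y|=n-k-h-1=1+(h-1)+q_\star\), which gives \(q_\star=n-k-2h-1\). Since \(n\ge n_{\mathrm{int}}(k,k+h)\ge 2\delta+2\), we have \(|Y|\ge 2h+2\), so \(q_\star\ge h+2>0\) and every listed class other than possibly \(W\) is nonempty.

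Next I check, class by class, that every vertex of a given class has the same number of neighbors in each class, and that these numbers are the entries of \eqref{eq:maximum-quotient}.
\begin{itemize}
\item The vertex \(c\) misses exactly the \(h\) leaves inside \(X\). So it sees \(0\) in \(L\), \(h+1\) in \(I\), \(w\) in \(W\), and in \(Y\) exactly \(y_1\) and all of \(Y_2\).
\item A leaf misses only \(c\) inside \(X\). So it sees \(h-1\) other leaves, all \(h+1\) vertices of \(I\), all of \(W\), and in \(Y\) only \(y_1\).
\item An isolated vertex of \(M^\star\) sees all of \(X\) except itself, namely \(1,h,h\), together with \(W\), and nothing in \(Y\).
\item A vertex of \(W\) is universal, so its row consists of the class sizes, with \(w-1\) in the \(W\)-entry.
\item The vertex \(y_1\) sees \(c\), all of \(L\), no vertex of \(I\), and all of \(W\cup Y_2\cup Y_3\).
\item A vertex of \(Y_2\) sees \(c\) only inside \(X\), together with \(W\), \(y_1\), the other \(h-2\) vertices of \(Y_2\), and \(Y_3\).
\item A vertex of \(Y_3\) sees nothing in \(X\), and sees all of \(W\cup\{y_1\}\cup Y_2\) together with \(q_\star-1\) vertices of \(Y_3\).
\end{itemize}
Reading these counts row by row reproduces \eqref{eq:maximum-quotient}.

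When \(w=0\), deleting the empty class \(W\) only removes a zero row and column from the neighbor counts, so the partition of the remaining classes is still equitable with the indicated reduced matrix. Since \(G^\star_{h,k,n}\in\mathcal{L}_{\mathcal H}^{2}(n,k,\delta)\) is connected by \Cref{prop:LH2-reparametrization}, \Cref{lem:equitable-quotient} then gives \(\rho(G^\star_{h,k,n})=\rho(Q^\star_{h,k,n})\).

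There is no real obstacle here. The only point needing care is bookkeeping: the entries involving \(Y_2\) (the \(h-2\) and \(h-1\) counts) and the fact that \(y_1\) is counted in \(N_Y(c)\) but not in \(Y_2\). I would double-check these entries against \(d_B(c)=h\) and \(d_B(\ell_i)=1\), and check that each \(X\)-row sums to \(\delta=k+h\).
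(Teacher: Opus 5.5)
Your proposal is correct and follows the paper's proof: you read off the neighbor counts of each of the seven classes from the nested star placement, match them row by row to \eqref{eq:maximum-quotient}, and apply \Cref{lem:equitable-quotient}, using the connectivity established in \Cref{prop:LH2-reparametrization}. Your extra checks (nonemptiness via \(q_{\star}\ge h+2\), and that each \(X\)-row sums to \(\delta\)) are sound additions. One small slip: when \(w=0\) the deleted \(W\)-row is not a zero row but the row of an empty class; it is the \(W\)-column entries in the remaining rows that vanish.
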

	
	\begin{proof}
		We verify the neighbor counts in the stated order of the partition
		classes.
		
		The center \(c\) has no neighbor in \(L\), is adjacent to every
		vertex of \(I\cup W\), and has \(Y\)-neighborhood
		\(\{y_1\}\cup Y_2\). Each vertex of \(L\) is adjacent to the other
		\(h-1\) vertices of \(L\), every vertex of \(I\cup W\), and \(y_1\),
		but is not adjacent to \(c\) or to any vertex of \(Y_2\cup Y_3\).
		
		Each vertex of \(I\) is adjacent to \(c\), all \(h\) vertices of
		\(L\), the other \(h\) vertices of \(I\), and every vertex of \(W\),
		but has no neighbor in \(Y\). Every vertex of \(W\) is universal.
		
		The vertex \(y_1\) is adjacent to \(c\), all vertices of \(L\), and
		every vertex of
		\(
		W\cup Y_2\cup Y_3.
		\)
		Each vertex of \(Y_2\) is adjacent to \(c\), every vertex of \(W\),
		the vertex \(y_1\), the other \(h-2\) vertices of \(Y_2\), and every
		vertex of \(Y_3\). Finally, every vertex of \(Y_3\) is adjacent to
		all vertices of
		\(
		W\cup\{y_1\}\cup Y_2\cup
		\bigl(Y_3\setminus\{y\}\bigr),
		\)
		and has no neighbor in \(X\).
		
		These counts give precisely the matrix in
		\eqref{eq:maximum-quotient}. The partition is equitable, and hence
		\Cref{lem:equitable-quotient} gives
		\(
		\rho\bigl(G^\star_{h,k,n}\bigr)
		=
		\rho\bigl(Q^\star_{h,k,n}\bigr).
		\)
	\end{proof}
	
    For the two boundary cases, let \(G^\star_{1,k,n}\) and
    \(G^\star_{0,k,n}\) denote the maximizing graphs identified in
    \Cref{lem:boundary-h}.
    
    Suppose first that \(h=1\). Let \(uv\) be the unique edge of the
    missing-edge graph, let \(I\) be the set of the other two vertices of
    \(X\), and let \(y_1\in Y\) be the common \(Y\)-neighbor of \(u\) and
    \(v\). Put
	    \(
	    Y'=Y\setminus\{y_1\},
	    \;
	    |Y'|=n-k-3.
    \)
    With rows and columns ordered as
    \(
    \{u,v\},\ I,\ W,\ \{y_1\},\ Y',
    \)
    define
    \begin{equation}
    	Q^\star_{1,k,n}
    	=
    	\begin{pmatrix}
    		0 & 2 & k-2 & 1 & 0\\
    		2 & 1 & k-2 & 0 & 0\\
    		2 & 2 & k-3 & 1 & n-k-3\\
    		2 & 0 & k-2 & 0 & n-k-3\\
    		0 & 0 & k-2 & 1 & n-k-4
    	\end{pmatrix}.
    	\label{eq:maximum-quotient-h1}
    \end{equation}
    When \(k=2\), the class \(W\) is empty, and
    \(Q^\star_{1,2,n}\) is understood to be the matrix obtained from
    \eqref{eq:maximum-quotient-h1} by deleting the third row and third
    column before substituting \(k=2\).
    
    When \(h=0\), the partition
    \(
    X,\ W,\ Y
    \)
    has class sizes
    \(
    2,\ k-1,\ n-k-1,
    \)
    and its quotient matrix is
    \begin{equation}
    	Q^\star_{0,k,n}
    	=
    	\begin{pmatrix}
    		1 & k-1 & 0\\
    		2 & k-2 & n-k-1\\
    		0 & k-1 & n-k-2
    	\end{pmatrix}.
    	\label{eq:maximum-quotient-h0}
    \end{equation}
	    Both displayed partitions are equitable. Hence
	    \Cref{lem:equitable-quotient} gives
	    \(\rho(G^\star_{1,k,n})=\rho(Q^\star_{1,k,n})\) and
	    \(\rho(G^\star_{0,k,n})=\rho(Q^\star_{0,k,n})\).
    
    Thus, in the following corollary, \(G^\star_{h,k,n}\) denotes the
    maximizing graph defined in the appropriate one of the three cases
    \(h=0\), \(h=1\), and \(h\ge2\).
    
    \begin{corollary}
    	\label{cor:final-threshold}
    	Fix integers \(k\ge2\) and \(k\le\delta\le2k-1\), put
    	\(
    	h=\delta-k,
    	\)
    	and let
    	\(
    	n\ge n_0(k,\delta),
    	\)
    	where \(n_0(k,\delta)\) is defined in
    	\eqref{eq:n0-definition}. Then every
    	\(G\in\mathcal{G}_{n,\delta}\) satisfies
    	\(
    	\tau(G)\le k-1
    	\;\Longrightarrow\;
    	\rho(G)\le\rho\bigl(G^\star_{h,k,n}\bigr),
    	\)
    	with equality if and only if
    	\(
    	G\cong G^\star_{h,k,n}.
    	\)
    	Equivalently,
    	\(
    	\rho(G)\ge\rho\bigl(G^\star_{h,k,n}\bigr)
    	\;\Longrightarrow\;
    	\tau(G)\ge k
    	\ \text{or}\
    	G\cong G^\star_{h,k,n}.
    	\)
    	In particular,
    	\(
    	\rho(G)>\rho\bigl(G^\star_{h,k,n}\bigr)
    	\;\Longrightarrow\;
    	\tau(G)\ge k.
    	\)
    	
    	The sharp threshold \(\rho(G^\star_{h,k,n})\) is the Perron root of
    	\(Q^\star_{0,k,n}\), \(Q^\star_{1,k,n}\), or
    	\(Q^\star_{h,k,n}\), according as \(h=0\), \(h=1\), or \(h\ge2\),
    	where the corresponding matrices are given in
    	\eqref{eq:maximum-quotient-h0},
    	\eqref{eq:maximum-quotient-h1}, and
    	\eqref{eq:maximum-quotient}, respectively.
    \end{corollary}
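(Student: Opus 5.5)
The plan is to combine the reduction theorem of Chang--Li--Zhang, recorded in \Cref{subsec:clz-reduction}, with the internal classification \Cref{thm:intro-ordinary}. Since \(n\ge n_0(k,\delta)\ge n_{\mathrm{CLZ}}(k,\delta)\), Theorem~1.5 of Chang--Li--Zhang applies. Since also \(n\ge n_{\mathrm{int}}(k,\delta)\), \Cref{thm:intro-ordinary} applies, and it identifies the maximizer family exactly:
\[
\mathcal{L}^{*}(n,k,\delta)=\{G:G\cong G^\star_{h,k,n}\}.
\]
This set is nonempty, so we may take \(\widehat G=G^\star_{h,k,n}\) in the reduction statement.

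I will prove the second (equivalent) form first. Let \(G\in\mathcal{G}_{n,\delta}\) satisfy \(\rho(G)\ge\rho(G^\star_{h,k,n})\). The reduction gives \(\tau(G)\ge k\) or \(G\in\mathcal{L}^*(n,k,\delta)\), and the latter means \(G\cong G^\star_{h,k,n}\). The strict version follows at once. If \(\rho(G)>\rho(G^\star_{h,k,n})\), then \(G\not\cong G^\star_{h,k,n}\), since isomorphic graphs have equal spectral radii, and hence \(\tau(G)\ge k\).

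The first form is the contrapositive. Suppose \(\tau(G)\le k-1\) and, for contradiction, \(\rho(G)>\rho(G^\star_{h,k,n})\). The previous step then yields \(\tau(G)\ge k\), which is impossible. Hence \(\rho(G)\le\rho(G^\star_{h,k,n})\). If equality holds, the reduction forces \(G\cong G^\star_{h,k,n}\), because \(\tau(G)\ge k\) is excluded. Conversely, \(G^\star_{h,k,n}\) itself lies in \(\mathcal{G}_{n,\delta}\) by \Cref{prop:LH2-reparametrization}, since \(n_0\ge 2\delta+2\). It also satisfies \(\tau\le k-1\) by \Cref{prop:packing-obstruction}. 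So the equality case is attained, and only up to isomorphism.

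Finally, I will identify the threshold value. For \(h\ge2\) it is given by \Cref{prop:maximum-equitable-quotient}. For \(h\in\{0,1\}\) I will check directly that the partitions listed before \eqref{eq:maximum-quotient-h1} and \eqref{eq:maximum-quotient-h0} are equitable and then apply \Cref{lem:equitable-quotient}. For \(h=1\) this means checking that \(u,v\) each see the other two core vertices \(I\), all of \(W\), and \(y_1\). Connectedness, which that lemma requires, is part of \Cref{prop:LH2-reparametrization}.

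The main obstacle is bookkeeping rather than mathematics. The nonquantitative hypothesis \(n\gg\delta\) of Chang--Li--Zhang is encoded only through \(n_{\mathrm{CLZ}}\), so the argument must use exactly the form of their Theorem~1.5 quoted in \Cref{subsec:clz-reduction}. The other point needing care is the isomorphism-invariance step, which identifies \(\mathcal{L}^*\) with the isomorphism class of \(G^\star_{h,k,n}\). I will also double-check the \(h=1\), \(k=2\) degenerate case in which \(W\) is empty, so that the quotient is the reduced matrix.
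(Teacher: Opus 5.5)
Your proposal is correct and follows essentially the same route as the paper's proof. You apply Chang--Li--Zhang's Theorem~1.5 with \(\widehat G=G^\star_{h,k,n}\), and use \Cref{thm:intro-ordinary} to identify \(\mathcal{L}^{*}(n,k,\delta)\) with the isomorphism class of \(G^\star_{h,k,n}\). You then invoke \Cref{prop:packing-obstruction} for sharpness and read the threshold off the equitable quotient matrices; proving the second form first and then taking the contrapositive is only a cosmetic reordering of the paper's argument.
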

    
    \begin{proof}
    	Let
    	\(
    	G\in\mathcal{G}_{n,\delta}\;
    	\tau(G)\le k-1.
    	\)
    	Suppose first that
    	\(
    	\rho(G)\ge\rho\bigl(G^\star_{h,k,n}\bigr).
    	\)
    	Since \(n\ge n_0(k,\delta)\ge n_{\mathrm{CLZ}}(k,\delta)\),
    	Chang--Li--Zhang's Theorem~1.5 implies that
    	\(
    	G\in\mathcal{L}^{*}(n,k,\delta).
    	\)
    	By \Cref{thm:intro-ordinary},
    	\(G^\star_{h,k,n}\) is the unique member of
    	\(\mathcal{L}^{*}(n,k,\delta)\) up to isomorphism. Therefore
    	\(
    	G\cong G^\star_{h,k,n}
    	\)
    	and
    	\(
    	\rho(G)=\rho\bigl(G^\star_{h,k,n}\bigr).
    	\)
    	It follows that every graph satisfying \(\tau(G)\le k-1\) obeys
    	\(
    	\rho(G)\le\rho\bigl(G^\star_{h,k,n}\bigr),
    	\)
    	with equality if and only if
    	\(G\cong G^\star_{h,k,n}\). Equivalently, if
    	\(\rho(G)\ge\rho(G^\star_{h,k,n})\), then either
    	\(\tau(G)\ge k\) or \(G\cong G^\star_{h,k,n}\).
    	The strict-threshold implication is an immediate special case.
    	
    	Finally,
    	\Cref{prop:packing-obstruction} gives
    	\(
    	\tau\bigl(G^\star_{h,k,n}\bigr)\le k-1,
    	\)
    	so the threshold is attained and is therefore sharp. Its quotient
    	matrix description follows from
    	\eqref{eq:maximum-quotient-h0},
    	\eqref{eq:maximum-quotient-h1}, and
    	\Cref{prop:maximum-equitable-quotient}.
    \end{proof}

\section{Conclusion}

For every fixed pair \((k,\delta)\) satisfying
\(k\le\delta\le2k-1\), we have determined both spectral endpoints of
\(\mathcal{L}_{\mathcal{H}}^{2}(n,k,\delta)\) for all sufficiently large
\(n\). The maximum is governed by concentration: its missing-edge graph is a
star for \(h=\delta-k\ge2\), and its exceptional edges have a nested
placement. The minimum is governed by dispersion: its missing-edge graph is a
matching, and its exceptional edges have pairwise distinct endpoints in the
large clique. The boundary cases \(h=0\) and \(h=1\) fit the same principle.

The proof also supplies exact bounded-order quotient matrices for the two
extremal graphs. Combining the maximizing structure with the reduction of
Chang--Li--Zhang gives the sharp adjacency-spectral threshold for forcing
\(k\) edge-disjoint spanning trees in the stated minimum-degree range, subject
to the same sufficiently-large-order hypothesis.
\section*{Acknowledgments}

The authors would like to thank the members of the graph theory group at
Central China Normal University for helpful discussions and comments on an
earlier version of this manuscript.
\section*{Data Availability Statement}

No datasets were generated or analyzed in this study.


\small
\begin{thebibliography}{99}
		
		\bibitem{CioabaWong2012}
		S. M. Cioab\u{a} and W. Wong,
		Edge-disjoint spanning trees and eigenvalues of regular graphs,
		\emph{Linear Algebra Appl.} 437 (2012), 630--647.
		
		\bibitem{ChangLiZhang2026}
		L. Chang, S. Li and M. Zhang,
		Edge-disjoint spanning trees, eigenvalues, and size of graphs,
		\emph{J. Graph Theory} (2026), 1--26,
		doi:10.1002/jgt.70087.
		
		\bibitem{CvetkovicRowlinsonSimic2010}
		D. Cvetkovi\'{c}, P. Rowlinson and S. Simi\'{c},
		\emph{An Introduction to the Theory of Graph Spectra},
		Cambridge University Press, Cambridge, 2010.
		
		\bibitem{FanGuLin2023}
		D. Fan, X. Gu and H. Lin,
		Spectral radius and edge-disjoint spanning trees,
		\emph{J. Graph Theory} 104 (2023), 697--711,
		doi:10.1002/jgt.22996.
		
		\bibitem{GuLaiLiYao2016}
		X. Gu, H.-J. Lai, P. Li and S. Yao,
		Edge-disjoint spanning trees, edge connectivity and eigenvalues in graphs,
		\emph{J. Graph Theory} 81 (2016), 16--29,
		doi:10.1002/jgt.21857.
		
		\bibitem{LiShi2013}
		G. Li and L. Shi,
		Edge-disjoint spanning trees and eigenvalues of graphs,
		\emph{Linear Algebra Appl.} 439 (2013), 2784--2789,
		doi:10.1016/j.laa.2013.08.041.
		
		\bibitem{LiuHongLai2014}
		Q. Liu, Y. Hong and H.-J. Lai,
		Edge-disjoint spanning trees and eigenvalues,
		\emph{Linear Algebra Appl.} 444 (2014), 146--151,
		doi:10.1016/j.laa.2013.11.039.
		
		\bibitem{LiuHongGuLai2014}
		Q. Liu, Y. Hong, X. Gu and H.-J. Lai,
		Note on edge-disjoint spanning trees and eigenvalues,
		\emph{Linear Algebra Appl.} 458 (2014), 128--133,
		doi:10.1016/j.laa.2014.05.044.
		
		\bibitem{NashWilliams1961}
		C. St. J. A. Nash-Williams,
		Edge-disjoint spanning trees of finite graphs,
		\emph{J. London Math. Soc.} 36 (1961), 445--450.
		
		\bibitem{Palmer2001}
		E. M. Palmer,
		On the spanning tree packing number of a graph: a survey,
		\emph{Discrete Math.} 230 (2001), 13--21.
		
		\bibitem{Tutte1961}
		W. T. Tutte,
		On the problem of decomposing a graph into \(n\) connected factors,
		\emph{J. London Math. Soc.} 36 (1961), 221--230.
		
	\end{thebibliography}
\end{document}